\newtheorem{thm}{Theorem}[section]
\newtheorem*{thm*}{Theorem}
\newtheorem{cor}[thm]{Corollary}
\newtheorem{lem}[thm]{Lemma}
\newtheorem{prop}[thm]{Proposition}
\newtheorem*{prop*}{Proposition}
\newtheorem*{conj*}{Conjecture}
\newtheorem*{dfn*}{Definition}
\theoremstyle{definition}
\newtheorem{rem}[thm]{\textbf{Remark}}
\newtheorem*{rmk*}{Remark}
\newtheorem*{fact*}{Fact}
\theoremstyle{proof}
\newcommand{\snorm}[1]{\Vert#1\Vert}
\newcommand{\abs}[1]{\left\vert#1\right\vert}
\newcommand{\set}[1]{\left\{#1\right\}}
\newcommand{\brac}[1]{\left(#1\right)}
\newcommand{\Real}{\mathbb{R}}
\newcommand{\eps}{\varepsilon}
\newcommand{\K}{\mathcal{K}}
\newcommand{\I}{\mathcal{I}}
\newlength{\defbaselineskip}
\numberwithin{equation}{section}
\begin{document}

\title{Isoperimetric and Concentration Inequalities -
Equivalence under Curvature Lower Bound}
\markright{Equivalence of Isoperimetry and Concentration}
\author{Emanuel Milman\textsuperscript{1}}
\date{}

\footnotetext[1]{School of Mathematics,
Institute for Advanced Study, Einstein Drive, Simonyi Hall, Princeton, NJ 08540, USA.
Email: emilman@math.ias.edu.\\
Supported by NSF under agreement \#DMS-0635607.\\
2000 Mathematics Subject Classification: 32F32, 53C21, 53C20.}

\maketitle

\begin{abstract}
It is well known that isoperimetric inequalities imply in a very general measure-metric-space setting appropriate concentration inequalities. The former bound the boundary measure of sets as a function of their measure, whereas the latter bound the measure of sets separated from sets having half the total measure, as a function of their mutual distance. The reverse implication is in general false. It is shown that under a (possibly negative) lower bound condition on a natural notion of curvature associated to a Riemannian manifold equipped with a density,
completely general concentration inequalities imply back their isoperimetric counterparts, up to dimension \emph{independent} bounds. The results are essentially best possible (up to constants), and significantly extend all previously known results,
which could only deduce dimension dependent bounds, or could not deduce anything stronger than a linear isoperimetric inequality in the restrictive non-negative curvature setting.
As a corollary, all of these previous results are recovered and extended by generalizing an isoperimetric inequality of Bobkov.
Further applications will be described in subsequent works. Contrary to previous attempts in this direction, our method is entirely geometric, continuing the approach set forth by Gromov and adapted to the manifold-with-density setting by Morgan.
\end{abstract}

\pagestyle{myheadings}

\section{Introduction}

Let $(\Omega,d)$ denote a separable metric space, and let $\mu$
denote a Borel probability measure on $(\Omega,d)$.
One way to measure the interplay between the metric $d$ and the measure $\mu$ is by means of an isoperimetric
inequality. Recall that Minkowski's (exterior) boundary measure of a
Borel set $A \subset \Omega$, which we denote here by $\mu^+(A)$, is
defined as $\mu^+(A) := \liminf_{\eps \to 0} \frac{\mu(A^d_{\eps}) -
\mu(A)}{\eps}$, where $A_{\eps}=A^d_{\eps} := \set{x \in \Omega ; \exists y
\in A \;\; d(x,y) < \eps}$ denotes the $\eps$ extension of $A$ with
respect to the metric $d$. The isoperimetric profile $\I =
\I_{(\Omega,d,\mu)}$ is defined as the pointwise maximal function $\I
: [0,1] \rightarrow \Real_+$, so that $\mu^+(A) \geq \I(\mu(A))$, for
all Borel sets $A \subset \Omega$. An isoperimetric inequality measures the relation between the boundary measure and the measure of a set, by providing a lower bound on $\I_{(\Omega,d,\mu)}$ by some function $J: [0,1] \rightarrow \Real_+$ which is not identically $0$. Since $A$ and $\Omega \setminus A$ will typically have the same boundary measure, it will be convenient to also define $\tilde{\I} :[0,1/2] \rightarrow \Real_+$ as $\tilde{\I}(v) := \min(\I(v),\I(1-v))$.

Another way to measure the relation between $d$ and $\mu$ is given by concentration inequalities. The log-concentration profile $\K = \K_{(\Omega,d,\mu)}$ is defined as the pointwise maximal function $\K:\Real_+ \rightarrow \Real$ such that $1 - \mu(A^d_r) \leq \exp(-\K(r))$ for all Borel sets $A \subset \Omega$ with $\mu(A) \geq 1/2$. Note that $\K(r) \geq \log 2$ for all $r \geq 0$. Concentration inequalities measure how tightly the measure $\mu$ is concentrated around sets having measure $1/2$ as a function of the distance $r$ away from these sets, by providing a lower bound on $\K$ by some non-decreasing function $\alpha: \Real_+ \rightarrow \Real_+ \cup \set{+\infty}$, so that $\alpha$ tends to infinity.

The two main differences between isoperimetric and concentration inequalities are that the latter ones only measure the concentration around sets having measure $1/2$, and do not provide any information for small distances $r$ (smaller than $R_\alpha:=\alpha^{-1}(\log 2)$). We refer to \cite{Ledoux-Book} for a wider exposition on these and related topics, and to \cite{MilmanSurveyOnConcentrationOfMeasure1988} for various applications.

\medskip

A well known example is that of the standard Gaussian measure $\mu = \gamma_n$ on $\Omega = \Real^n$, equipped with the standard Euclidean metric $d = \abs{\cdot}$. In this case, it was shown by Sudakov--Tsirelson \cite{SudakovTsirelson} and independently Borell \cite{Borell-GaussianIsoperimetry}, that $\I_{(\Real^n,\abs{\cdot},\gamma_n)} = \I_{(\Real,\abs{\cdot},\gamma_1)} = \phi \circ \Phi^{-1}$, where $\phi(y) = (2\pi)^{-1/2} \exp(-y^2/2)$ and $\Phi(x) = \int_{-\infty}^x \phi(y) dy$, and this easily implies that $\K_{(\Real^n,\abs{\cdot},\gamma_n)}(r) = -\log(1-\Phi(r))$. We remark that it is not hard to verify from this that:
\[
c_1 \leq \frac{\tilde{\I}_{(\Real,\abs{\cdot},\gamma_1)}(v)}{v \sqrt{\log 1/v}} \leq c_2 \;\;\; \forall v \in [0,1/2] ~,
\]
where $c_i>0$ are some constants.

More generally, it is known and easy to see that an isoperimetric inequality implies a concentration inequality, simply by ``integrating'' along the isoperimetric differential inequality. Specifically, if $\gamma : [\log 2,\infty) \rightarrow \Real_+$ is a continuous function, it is an easy exercise to show (e.g. \cite[Proposition 1.7]{EMilmanSodinIsoperimetryForULC}) that:
\begin{equation} \label{eq:i-c}
\begin{array}{c}
\tilde{\I}(v) \geq v \gamma(\log 1/v) \;\;\; \forall v\in[0,1/2] \\
\Downarrow \\
\K(r) \geq \alpha(r) \;\;\; \forall r \geq 0 \;\;\; \textrm{where} \;\; \alpha^{-1}(x) = \int_{\log 2}^x \frac{dy}{\gamma(y)} ~.
\end{array}
\end{equation}
The converse statement, that a concentration inequality implies an isoperimetric inequality, is certainly false in general. This is especially apparent when considering a space $(\Omega,d)$ with bounded diameter $R$, in which case $\K(r) = +\infty$ for all $r > R$; but if the support of $\mu$ is disconnected, we can have $\I(v_i) = 0$ for any finite collection of points $\set{v_i} \subset (0,1)$. Of course, this type of counterexample may also be achieved without demanding that the support of $\mu$ be disconnected, but rather by forcing $\mu$ to have little mass (``necks'') in between massive regions.

\smallskip

We will henceforth assume that $\Omega$ is a smooth complete oriented connected $n$-dimensional ($n\geq 2$) Riemannian manifold
$(M,g)$, that $d$ is the induced geodesic distance, and that $\mu$ is an absolutely continuous measure with respect to the Riemannian volume form $vol_M$ on $M$. The above examples demonstrate that in order to have any chance of showing that concentration inequalities imply isoperimetric ones, we need to impose some further conditions which would prevent the existence of small necks. It is therefore very natural, at least intuitively, to require lower bounds on some appropriate curvatures of $(M,g)$ and $\mu$. In this work, we verify that in that case, concentration indeed implies isoperimetry, with quantitative estimates which \emph{do not depend} on the dimension $n$ of $M$, a feature which is essential both in principle and for applications. All our results equally hold when $n=1$ as well, but this case follows from previously known results (described below) and would require special treatment in our approach, so we have chosen to exclude this case from our setup.

\subsection{Main Results}

\begin{dfn*} \label{def:CA}
We will say that our \emph{smooth $\kappa$-semi-convexity assumptions} are satisfied $(\kappa \geq 0)$ if $d\mu = \exp(-\psi) dvol_M$ where $\psi \in C^2(M)$, and as tensor fields on $M$:
\[
 Ric_g + Hess_g \psi \geq -\kappa g ~.
\]
We will say that our \emph{$\kappa$-semi-convexity assumptions} are satisfied if $\mu$ can be approximated in total-variation by measures $\set{\mu_m}$ so that each $(\Omega,d,\mu_m)$ satisfies our smooth $\kappa$-semi-convexity assumptions. \\
When $\kappa=0$, we will say in either case that our \emph{(smooth) convexity assumptions} are satisfied.
\end{dfn*}

\smallskip

Here $Ric_g$ denotes the Ricci curvature tensor of $(M,g)$ and $Hess_g$ denotes the second covariant derivative. $Ric_g + Hess_g \psi$ is the well-known Bakry--\'Emery curvature tensor, introduced in \cite{BakryEmery} (in the more abstract framework of diffusion generators), which incorporates the curvature from both the geometry of $(M,g)$ and the measure $\mu$. When $\psi$ is sufficiently smooth, our $\kappa$-semi-convexity assumption is then precisely the Curvature-Dimension condition $CD(-\kappa,\infty)$ (see \cite{BakryEmery}).

\begin{thm} \label{thm:1}
Let $\alpha: \Real_+ \rightarrow \Real_+ \cup \set{+\infty}$ denote a non-decreasing function. Then under our convexity assumptions, the concentration inequality:
\begin{equation} \label{eq:c-1}
\K(r) \geq \alpha(r) \;\;\; \forall r \geq R_\alpha ~
\end{equation}
implies the following isoperimetric inequality:
\begin{equation} \label{eq:i-1}
 \tilde{\I}(v) \geq \min(c \; v \gamma(\log 1/v),c_\alpha) \;\;\; \forall v \in [0,1/2] \;\;\; , \;\;\; \textrm{where} \;\; \gamma(x) = \frac{x}{\alpha^{-1}(x)} ~,
\end{equation}
$c>0$ is a universal numeric constant and $c_\alpha>0$ is a constant depending solely on $\alpha$. Moreover, for any choice of parameter $\lambda \in (0,1/2)$, both constants may be chosen to depend solely on $\lambda$ and $\alpha^{-1}(\log 1/\lambda)$.
\end{thm}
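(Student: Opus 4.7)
The plan is to exploit the concavity of the isoperimetric profile $\tilde{\I}$ on $[0,1/2]$ (forced by the convexity assumption), combined with a geometric analysis of the one-parameter family of parallel bodies of an isoperimetric minimizer of half-measure, and to feed the concentration hypothesis into the resulting ODE structure.

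First, using the approximation clause in the convexity assumption, I would reduce to the smooth case $d\mu = e^{-\psi} dvol_M$ with $\psi \in C^2(M)$ and $Ric_g + Hess_g\,\psi \geq 0$; the total-variation stability of both sides of the implication is routine. In this smooth setting, I would invoke Morgan's regularity theory for isoperimetric minimizers in the manifold-with-density setting, together with a classical concavity argument traceable to Bavard--Pansu and extended to the weighted case by Bayle, Sternberg--Zumbrun, and Morgan, to obtain: (a) existence of an isoperimetric minimizer $A_0 \subset M$ with $\mu(A_0)=1/2$; and (b) concavity of $\tilde{\I}$ on $[0,1/2]$. Since $\tilde{\I}(0)=0$, concavity implies that the ratio $w \mapsto \tilde{\I}(w)/w$ is non-increasing on $(0,1/2]$.

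Next, I would analyze the family of parallel bodies $\{A_{0,r}\}_{r \geq 0}$. Setting $V(r):=\mu(A_{0,r})$ and $W(r):=1-V(r)$, the standard pointwise bound $V'(r) = \mu^+(A_{0,r}) \geq \I(V(r)) = \tilde{\I}(W(r))$ (using the symmetry $\I(v)=\I(1-v)$) yields the differential inequality $W'(r) \leq -\tilde{\I}(W(r))$. The crucial geometric step is to upgrade this soft inequality to near-equality along the family; under $Ric_g + Hess_g\,\psi \geq 0$ this can be carried out by showing that every parallel body $A_{0,r}$ remains an isoperimetric minimizer (a first-variation/rigidity argument using concavity of $\tilde{\I}$ together with the curvature bound). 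Writing $R(w) := \int_w^{1/2} dw'/\tilde{\I}(w')$, the resulting ODE integrates to $W(r) = R^{-1}(r)$, so the concentration hypothesis $W(r) \leq e^{-\alpha(r)}$ forces
\[
R(w) \leq \alpha^{-1}(\log(1/w)) \;\;\; \forall w \in (0,1/2].
\]

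Finally, I would combine this integral bound on $R$ with concavity of $\tilde{\I}$: for $w' \geq w$ one has $1/\tilde{\I}(w') \geq w/(w'\tilde{\I}(w))$ from the non-increase of $\tilde{\I}(w')/w'$, so integrating over $w' \in [w,1/2]$ yields $R(w) \geq (w/\tilde{\I}(w)) \log(1/(2w))$. Comparing with the upper bound above produces
\[
\tilde{\I}(w) \geq \frac{w\log(1/(2w))}{\alpha^{-1}(\log(1/w))} \geq c\, w \gamma(\log(1/w))
\]
for $w$ bounded away from $1/2$, which is the main claim. The regime $w \in [\lambda,1/2]$ is handled by a cruder direct application of concentration at the fixed distance $\alpha^{-1}(\log(1/\lambda))$, together with concavity of $\tilde{\I}$ to transfer a constant lower bound from near $w = 1/2$ downward; this yields the constant $c_\alpha$, with the explicit dependence on $\lambda$ and $\alpha^{-1}(\log(1/\lambda))$ transparent from the construction. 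The principal obstacle, and where the curvature lower bound enters essentially, is the rigidification step: upgrading $W' \leq -\tilde{\I}(W)$ to near-equality along a geometric family of sets. Without the convexity assumption this fails even at the level of a single example, and careful treatment of the (measure-zero) singular set of the minimizer $A_0$ and of the approximation limit will also be required.
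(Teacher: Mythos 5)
The proposal breaks down at the step you yourself flag as crucial: upgrading the soft inequality $W'(r) \leq -\tilde{\I}(W(r))$ to (near-)equality by claiming that every parallel body $A_{0,r}$ of a half-measure minimizer remains an isoperimetric minimizer. No such rigidity holds under $Ric_g + Hess_g\,\psi \geq 0$, and neither concavity of the profile nor a first-variation argument yields it; you give no mechanism for it, and it is not a known theorem. The logical problem is also directional: the soft bound $V'(r) \geq \I(V(r))$ and the concentration hypothesis $W(r) \leq e^{-\alpha(r)}$ are both \emph{upper} bounds on $W$, so they cannot be combined to lower-bound $\tilde{\I}$. What is needed is a reverse estimate, i.e.\ an upper bound on the growth $\mu(A_r) - \mu(A)$ in terms of the boundary measure of $A$ itself, and this is exactly what the paper obtains from the generalized Heintze--Karcher theorem (Theorem \ref{thm:tool}): $\mu(A_r)-\mu(A) \leq \mu^+(A)\int_0^r e^{H_\mu(A)t}\,dt$, where the constant $\mu$-total-curvature $H_\mu(A)$ is in turn controlled by $H_\mu(A) \leq \I(v)/v$ via the first variation together with Heintze--Karcher applied to the complement (Proposition \ref{prop:weak-concave}). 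The paper then applies this at \emph{each} measure $v\in(0,1/2)$ with $r = \alpha^{-1}(\log 1/v)$, rather than integrating an exact ODE along the parallel bodies of a single minimizer; without your rigidity claim, your family $\{A_{0,r}\}$ carries no usable information about $\tilde{\I}$ at measures other than $1/2$.

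Two smaller points: the reduction to the smooth case is not ``routine'' total-variation stability --- isoperimetric profiles are only one-sidedly semicontinuous under total-variation limits, which is why the paper needs the uniform estimates of Section \ref{sec:aa}; and the large-measure regime $v\in[\lambda,1/2]$ can indeed be handled either by concavity of $\I$ or, as the paper prefers, only by the weaker monotonicity of $\I(v)/v$ from Proposition \ref{prop:weak-concave}, avoiding the second variation altogether. But the central gap is the unjustified (and generally false) claim that parallel sets of minimizers are minimizers; as it stands the argument does not prove the theorem.
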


\smallskip

We remark that our convention for the inverse of a non-decreasing function will be specified in (\ref{eq:inverse-conv}) of Section \ref{sec:aa}.
Theorem \ref{thm:1} may be generalized to the handle our $\kappa$-semi-convexity assumptions for any $\kappa \geq 0$, by requiring an extra growth condition on $\alpha$ in the case that $\kappa > 0$. These two cases, $\kappa = 0$ and $\kappa > 0$, are conceptually different, in both the required assumptions and the consequent proof complexity, and so we have chosen to present them as two separate theorems, even though Theorem \ref{thm:1} is a particular case of Theorem \ref{thm:2} below.

\smallskip

\begin{thm} \label{thm:2}
Let $\kappa \geq 0$ and let $\alpha: \Real_+ \rightarrow \Real \cup \set{+\infty}$ denote a non-decreasing function so that:
\begin{equation} \label{eq:alpha-cond}
\exists \delta_0 > 1/2 \;\;\; \exists r_0 \geq 0 \;\;\; \forall r \geq r_0 \;\;\; \alpha(r) \geq \delta_0 \kappa r^2 ~.
\end{equation}
Then under our $\kappa$-semi-convexity assumptions, the concentration inequality:
\[
\K(r) \geq \alpha(r) \;\;\; \forall r \geq R_\alpha
\]
implies the following isoperimetric inequality:
\begin{equation} \label{eq:i-2}
\tilde{\I}(v) \geq \min(c_{\delta_0} \; v \gamma(\log 1/v),c_{\kappa,\alpha}) \;\;\; \forall v \in [0,1/2] \;\;\; , \;\;\; \textrm{where} \;\; \gamma(x) = \frac{x}{\alpha^{-1}(x)} ~,
\end{equation}
and $c_{\delta_0},c_{\kappa,\alpha}>0$ are constants depending solely on their arguments. Moreover, if $\kappa > 0$, then the dependence of $c_{\kappa,\alpha}$ on $\alpha$ may be expressed only via $\delta_0$ and $\alpha(r_0)$.
\end{thm}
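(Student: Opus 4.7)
The plan is to pursue the geometric route hinted at in the abstract: exploit the Bakry--\'Emery lower bound to prove a rigidity property of the isoperimetric profile $\tilde{\I}$, and then use the concentration hypothesis to rule out intervals on which $\tilde{\I}$ stays small.

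First I would reduce to the smooth setting. By the definition of our $\kappa$-semi-convexity assumption, $\mu$ is a total-variation limit of smoothly $\kappa$-semi-convex measures $\mu_m$; the concentration hypothesis transfers to each $\mu_m$ up to losses vanishing with the approximation, and conversely a uniform isoperimetric lower bound for the $\mu_m$ descends to $\mu$ by lower semi-continuity of Minkowski content. So I may assume $d\mu = \exp(-\psi) dvol_M$ with $\psi \in C^2(M)$ and $Ric_g + Hess_g\psi \geq -\kappa g$. Standard geometric-measure-theoretic arguments on manifolds with density (Morgan, after Almgren) then provide, for each $v \in (0,1)$, an isoperimetric minimizer $A_v$ with $\mu(A_v) = v$ and $\mu^+(A_v) = \I(v)$, whose reduced boundary is smooth off a singular set of codimension at least $7$.

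The technical core is a dimension-free differential inequality for $\tilde{\I}$. Computing the second variation of the weighted area along a normal deformation of $\partial A_v$ and plugging in $Ric_g + Hess_g\psi \geq -\kappa g$ yields, in a distributional sense,
\[
\brac{\tilde{\I}^{2}}''(v) \leq -2\kappa \;\;\; \forall v \in (0,1/2) ~,
\]
which is exactly concavity of $\tilde{\I}$ up to a $\kappa$-correction. Consequently, if $\tilde{\I}(v_0)$ is small at some $v_0 \in (0,1/2)$, the profile is squeezed on the whole interval $[v_0,1/2]$ under an explicit model $J_{\kappa,v_0}$ determined by $v_0$, $\tilde{\I}(v_0)$ and $\kappa$; by the coarea-type inequality $\tfrac{d}{dr}\mu(A_r) \geq \tilde{\I}(\mu(A_r))$ applied along the normal flow starting from $A_{v_0}$, the distance needed to grow $A_{v_0}$ to measure $1/2$ is then at least $\int_{v_0}^{1/2} dv / J_{\kappa,v_0}(v)$.

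The concluding step pits this lower bound against the concentration hypothesis. As noted in the text preceding (\ref{eq:i-c}), $\K(r) \geq \alpha(r)$ implies that any set of measure $v_0 \leq 1/2$ extends to measure at least $1/2$ within distance $\alpha^{-1}(\log 1/v_0)$; matching this with the integral above yields the desired estimate $\tilde{\I}(v_0) \geq c_{\delta_0}\, v_0\, \gamma(\log 1/v_0)$ for $v_0$ small enough, and for $v_0$ bounded away from $0$ the rigidity of $\tilde{\I}$ propagates a positive value at a fixed threshold $\lambda$ into the uniform constant $c_{\kappa,\alpha}$ on $[\lambda,1/2]$. I expect the main obstacle to be the derivation of the ODE inequality in genuinely dimension-free form under CD$(-\kappa,\infty)$ alone, together with the handling of the singular set of $\partial A_v$ so that the second-variation calculation is rigorous. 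The role of the growth condition (\ref{eq:alpha-cond}) also becomes transparent at the matching stage: when $\kappa > 0$, the model $J_{\kappa,v_0}$ carries a negative-curvature correction of order $\kappa$, and the required cancellation against $\alpha$ produces a usable estimate precisely when $\delta_0 > 1/2$, which is also why the $\alpha$-dependence of $c_{\kappa,\alpha}$ collapses in the end to a dependence on $\delta_0$ and $\alpha(r_0)$ alone.
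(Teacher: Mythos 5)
Your core step is not correct. The claimed dimension-free differential inequality $(\tilde{\I}^{2})''(v) \leq -2\kappa$ is false, even in the model cases. For the standard Gaussian on $\Real^n$ (which satisfies the smooth $\kappa$-semi-convexity assumptions for every $\kappa \geq 0$) one has $\I = \phi\circ\Phi^{-1}$, $\I' = -\Phi^{-1}$, $\I\,\I'' = -1$, hence $(\I^2)'' = 2\brac{\Phi^{-1}(v)}^2 - 2 \to +\infty$ as $v \to 0$; likewise, under the convexity assumptions ($\kappa=0$) a product-exponential-type log-concave measure has $\tilde{\I}(v) \asymp v$ near $0$, so $\tilde{\I}^2$ is strictly convex there. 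What the second variation actually yields under $Ric_g + Hess_g\psi \geq -\kappa g$ is $\I\,\I'' \leq \kappa$ (equivalently $\I'' \leq \kappa/\I$), i.e.\ the $\kappa$-correction has the \emph{opposite} sign to the one you wrote, and for $\kappa>0$ it implies no concavity or rigidity whatsoever. Moreover, even granting the correct inequality, your squeezing mechanism does not run: an upper bound on $\I''$ together with a small value $\tilde{\I}(v_0)$ gives no upper bound for $\tilde{\I}$ on $[v_0,1/2]$ unless you also bound $\I'(v_0)$ (equivalently $H_\mu(A)$) from above, and curvature alone cannot provide that when $\kappa>0$ -- this is exactly the point where the paper has to bring in measure/concentration information at the level of the minimizer itself.

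The paper's proof is structured quite differently and deliberately avoids the second variation (whose rigorous treatment in the non-compact weighted setting is one of the things the geometric approach is designed to bypass). It uses only the first variation (Proposition \ref{prop:H-is-derivative}) together with the generalized Heintze--Karcher bound (Theorem \ref{thm:tool}) applied to \emph{both} $A$ and $\Omega\setminus A$ with finite radii $r_+$ and $r_- = r_{\alpha,v/2}-r_+$ dictated by the concentration hypothesis; the resulting two inequalities control $H_\mu(A)$ and lead, after a case analysis on the signs of $S_\pm = \pm H_\mu(A) + \tfrac{\kappa}{2}r_\pm$ and the growth condition (\ref{eq:alpha-cond}) with $\delta_0 > 1/2$, to the bound $\tilde{\I}(v) \geq c_{\delta_0} v\gamma(\log 1/v)$ for small $v$, and a separate large-set argument (again Heintze--Karcher plus Proposition \ref{prop:H-is-derivative}) gives the constant $c_{\kappa,\alpha}$ on $[\lambda_0,1/2]$. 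Your matching step via the coarea-type inequality and $\alpha^{-1}(\log 1/v_0)$ is in the right spirit, and your approximation sketch is roughly the right direction (though the paper needs a one-sided lemma, since $\I_{(\Omega,d,\mu_m)}$ need not converge to $\I_{(\Omega,d,\mu)}$, plus a uniform transfer of the growth condition to $\K_{\mu_m}$), but as written the argument has a genuine gap at its center and would need to be rebuilt around first-variation/Heintze--Karcher estimates rather than a profile rigidity statement.
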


\smallskip

Since the value of $\alpha(r)$ for $r < R_\alpha$ is irrelevant for both assumption and conclusion in these theorems, one may replace $R_\alpha$ in both theorems by 0; our present formulation emphasizes that it is only the tail behaviour of $\alpha$ which is of importance. We also remark that the constants $c,c_\alpha,c_{\delta_0},c_{\kappa,\alpha}$ above have explicit values and formulas we will provide in Sections \ref{sec:thm1} and \ref{sec:thm2}, and emphasize again that they are dimension \emph{independent}.

\subsection{Optimality}

Up to these constants, Theorem \ref{thm:1} is an almost optimal counterpart to (\ref{eq:i-c}). For instance, under the assumptions of Theorem \ref{thm:1}, we may obtain from (\ref{eq:i-c}) and Theorem \ref{thm:1} that for all $p \geq 1$:
\begin{equation} \label{eq:extra-p}
\tilde{\I}(v) \geq v \log^{1-\frac{1}{p}} \frac{1}{v} \;\; \Rightarrow \;\; \K(r) \geq \brac{\frac{r}{p} + \log^{\frac{1}{p}} 2}^p \;\; \Rightarrow \;\; \tilde{\I}(v) \geq \frac{c}{p} v \log^{1-\frac{1}{p}} \frac{1}{v} ~, \end{equation}
where $c>0$ is a universal constant. Similarly to the Gaussian case, a typical model for these inequalities is obtained by considering $(\Real,\abs{\cdot})$ equipped with the probability measure $\exp(-|x/s_p|^p) dx$, where $s_p>0$ is a scaling factor (our convexity assumptions are indeed satisfied in this case). Note the deterioration in $p$ in the conclusion of (\ref{eq:extra-p}), which is especially apparent in the limit as $p \rightarrow \infty$, where we have under the same assumptions as above:
\begin{equation} \label{eq:extra-loglog}
\tilde{\I}(v) \geq v \log \frac{1}{v} \;\; \Rightarrow \;\; \K(r) \geq (\log 2) \exp(r) \;\; \Rightarrow \;\; \tilde{\I}(v) \geq c v \frac{\log \frac{1}{v}}{\log \log \frac{2}{v}} ~, \end{equation}
where $c>0$ is a universal constant.
It is still not clear whether this phenomenon, which also appeared in our previous joint work with Sasha Sodin \cite{EMilmanSodinIsoperimetryForULC}, is genuine or an artifact of the proof (see Remark \ref{rem:ODE}); however, the expression $x / \alpha^{-1}(x)$ (appearing in (\ref{eq:i-1}),(\ref{eq:i-2})) does appear naturally in other works as well (e.g. \cite{EMilmanSodinIsoperimetryForULC,BartheKolesnikov}).
In any case, it is an easy exercise to show that the extra $\log \log \frac{2}{v}$ factor appearing in (\ref{eq:extra-loglog}) is the worst possible gap one can obtain by this procedure.

\medskip

We also remark that the growth condition (\ref{eq:alpha-cond}) on $\alpha$ in Theorem \ref{thm:2} is necessary when $\kappa > 0$, even in the one-dimensional case. This follows from an example of Chen and Wang \cite{ChenWangOptimalConstantInSubGaussianImpliesLogSob}, improving a previous example of Wang \cite{WangImprovedIntegrabilityForLogSob}. For any $\kappa > 0$ and $0<\delta<1/2$, these authors constructed a measure $\mu = \exp(-\psi(x)) dx$ on $(\Real_+,\abs{\cdot})$ such that $\psi'' \geq -\kappa$ and $\K(r) \geq \delta \kappa r^2 + c_{\delta,\kappa}$ for all $r>0$, and yet $(\Real_+,\abs{\cdot},\mu)$ does not satisfy a log-Sobolev inequality, and hence (in fact, equivalently by \cite{BakryLedoux,LedouxSpectralGapAndGeometry}), it does not satisfy a Gaussian isoperimetric inequality: $\liminf_{v \rightarrow 0} \tilde{\I}(v) / v \sqrt{\log 1/v} = 0$. Moreover, it follows from \cite{WangImprovedIntegrabilityForLogSob} that $1/2$ is an upper bound on the value of $\delta$ in any possible counter example as above (this will be described in more detail below).
This demonstrates that the conclusion of Theorem \ref{thm:2} cannot hold without requiring that (\ref{eq:alpha-cond}) should hold with $\delta_0 \geq 1/2$.

Intuitively, condition (\ref{eq:alpha-cond}) means that we must require that the concentration inequality compensate for the negative curvature $-\kappa$ of the space. Since the curvature tensor is a second-order derivative, a natural condition is then indeed $\alpha(r) \geq \delta_0 \kappa r^2$ with $\delta_0 > 1/2$, in agreement with the above discussion.

\subsection{Previously Known Results}

Several variants of Theorems \ref{thm:1} and \ref{thm:2}, where our concentration assumption is replaced by the weaker assumption that $\int_\Omega \exp(\beta(d(x_0,x))) d\mu(x) < \infty$ for some (any) fixed $x_0 \in \Omega$, were previously considered by various authors, primarily for the case $\beta(r) = \delta r^p$ ($p\geq 1$). Unfortunately, the constants in the conclusion of these previous results always depended on the quantity $\int d(x_0,x) d\mu$, which under suitable normalizations (e.g. $\snorm{\frac{d\mu}{dvol_M}}_{L_\infty} \leq C^n$) may be shown to be at least as large as a \emph{dimension dependent} constant
(see Section \ref{sec:conclude}). Wang \cite{WangIntegrabilityForLogSob,WangImprovedIntegrabilityForLogSob} (see also Bakry--Ledoux--Qian \cite{BakryLedouxQianUnpublished}) showed that the case $p=2$ and $\delta > \kappa/2$ under our $\kappa$-semi-convexity assumptions implies a log-Sobolev inequality, which by the work of Bakry--Ledoux \cite{BakryLedoux} (see also Ledoux \cite{LedouxSpectralGapAndGeometry}) implies the right (Gaussian) isoperimetric inequality. As already mentioned, this result is optimal by a construction of Chen and Wang \cite{ChenWangOptimalConstantInSubGaussianImpliesLogSob}, in the sense that the conclusion is false if $\delta < \kappa/2$. A more elementary approach was proposed by Bobkov \cite{BobkovGaussianIsoLogSobEquivalent}, who considered the cases $p=1,2$ under our convexity assumptions on $(\Real^n,\abs{\cdot},\mu)$. Bobkov's method is in fact very general; his results were generalized to all $p\geq 1$ by Barthe \cite{BartheIntegrabilityImpliesIsoperimetryLikeBobkov}, and some underlying ideas may be carried over (to some extent) to treat general manifolds with density satisfying our convexity assumptions. Barthe and Kolesnikov \cite{BartheKolesnikov} have obtained the most general results in this spirit (see in particular \cite[Theorem 7.2]{BartheKolesnikov}), treating general $p>1$ under the convexity assumptions and $p \geq 2$ under the $\kappa$-semi-convexity ones, again, with dimension dependent constants in the conclusion.

It is a-priori not clear that our results may be compared with these previously known results, since the latter ones deduce a weaker (dimension dependent) conclusion under weaker assumptions. Nevertheless, we will see in Section \ref{sec:applications} that all of these results easily follow from our ones, by deducing the following generalization of Bobkov's isoperimetric inequality from \cite{BobkovGaussianIsoLogSobEquivalent}, originally proved in Euclidean space under our convexity assumptions.
The next corollary generalizes all of these previously known results into a \emph{single} coherent statement, and enables handling arbitrary functions $\beta$. Our version generalizes Bobkov's inequality both by handling the manifold-with-density setting and by addressing the $\kappa$-semi-convexity assumptions. We comment that Bobkov's original proof does not seem to generalize to manifolds (even in view of recent developments in that setting), which explains why Barthe and Kolesnikov had to employ other parallel techniques in \cite{BartheKolesnikov} to handle the manifold case.

\begin{cor} \label{cor:gen-Bobkov}
Given $x_0 \in \Omega$, let $\beta : \Real_+ \rightarrow \Real_+ \cup \set{+\infty}$ denote a non-decreasing function so that:
\begin{equation} \label{eq:beta}
-\log \mu\set{x \in \Omega ; d(x,x_0) \geq r} \geq \beta(r) ~.
\end{equation}
Assume that the space $(\Omega,d,\mu)$ satisfies our $\kappa$-semi-convexity assumptions ($\kappa \geq 0$), and that in addition
$\beta$ satisfies the growth requirement:
\[
\exists \delta_0 > 1/2 \;\;\; \exists r_0 \geq 0 \;\;\; \forall r \geq r_0 \;\;\; \beta(r) \geq \delta_0 \kappa r^2 ~.
\]
Then the following isoperimetric inequality holds:
\[
\tilde{\I}(v) \geq \min\brac{\frac{c_{\delta_0}}{\beta^{-1}(\log 1/v)} v \log 1/v , c_{\kappa,\beta}} \;\;\; \forall v \in [0,1/2] ~,
\]
where $c_{\delta_0},c_{\kappa,\beta} > 0$ depend solely on their arguments.
\end{cor}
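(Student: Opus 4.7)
The plan is to deduce the corollary from Theorem~\ref{thm:2} (and Theorem~\ref{thm:1} when $\kappa = 0$) by converting the tail-decay hypothesis (\ref{eq:beta}) into a concentration inequality for $\K$, verifying that the growth condition (\ref{eq:alpha-cond}) is preserved under this conversion, and finally comparing $\alpha^{-1}$ with $\beta^{-1}$ in the resulting bound.

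The translation from tail decay to concentration is a classical triangle-inequality argument. I would fix $R_0 := \beta^{-1}(2\log 2)$, so that the open ball $B_0 := \set{x : d(x,x_0) < R_0}$ satisfies $\mu(B_0) \geq 3/4 > 1/2$. For any Borel $A$ with $\mu(A) \geq 1/2$, the intersection $A \cap B_0$ is nonempty, so I may select $x_A \in A$ with $d(x_A,x_0) < R_0$; then for $r > R_0$ and any $y \notin A_r$, the triangle inequality gives $d(y,x_0) > r - R_0$. Hence
\[
1 - \mu(A_r) \leq \mu\set{y : d(y,x_0) \geq r - R_0} \leq \exp(-\beta(r - R_0)),
\]
which shows $\K(r) \geq \alpha(r)$ for all $r \geq 0$, where $\alpha(r) := \beta(\max(r - R_0, 0))$ (extended non-decreasingly on $[0,R_0]$).

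Next I would feed $\alpha$ into Theorem~\ref{thm:2}. The growth assumption (\ref{eq:alpha-cond}) is inherited: fixing any $\delta_0' \in (1/2, \delta_0)$, the shift costs only the factor $(1 - R_0/r)^2 \to 1$, so $\alpha(r) \geq \delta_0 \kappa (r - R_0)^2 \geq \delta_0' \kappa r^2$ once $r$ is large enough. The theorem then yields
\[
\tilde{\I}(v) \geq \min\brac{c_{\delta_0'} \cdot \frac{v \log(1/v)}{\alpha^{-1}(\log(1/v))},\; c_{\kappa,\alpha}} \qquad \forall v \in [0,1/2].
\]
Since $\alpha^{-1}(x) = \beta^{-1}(x) + R_0$ for $x \geq 2\log 2$, and $\beta^{-1}(\log(1/v)) \geq \beta^{-1}(2\log 2) = R_0$ for every $v \in (0, 1/4]$, one has $\alpha^{-1}(\log(1/v)) \leq 2\beta^{-1}(\log(1/v))$ on this range; the residual regime $v \in [1/4, 1/2]$ is absorbed into the constant floor $c_{\kappa,\alpha}$. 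Collecting constants and relabeling yields the inequality stated in the corollary.

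The only delicate point in the whole reduction is preserving the strict inequality $\delta_0 > 1/2$ under the shift $r \mapsto r - R_0$, which is exactly the slack provided by the corollary's hypothesis; the remainder is routine bookkeeping.
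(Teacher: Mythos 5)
Your proof is correct and takes essentially the same approach as the paper: convert the tail-decay condition (\ref{eq:beta}) into a concentration bound $\K(r)\geq\alpha(r)$ for a shifted copy $\alpha$ of $\beta$, check that the growth condition (\ref{eq:alpha-cond}) survives the shift, and apply Theorem~\ref{thm:2} after comparing $\alpha^{-1}$ with $\beta^{-1}$. The only difference is cosmetic: the paper shifts by $R=\beta^{-1}(\log 2)$, so that $\mu(B_R)$ only reaches $1/2$ and the case $A\cap B_R=\emptyset$ must be handled via a boundary argument, whereas you shift by $R_0=\beta^{-1}(2\log 2)$ so that $\mu(A\cap B_0)\geq 1/4>0$ holds outright, trading a marginally larger shift constant for a simpler non-emptiness argument before feeding $\alpha$ into the theorem identically.
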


The similarity in the formulations of Theorems \ref{thm:1},\ref{thm:2} and Corollary \ref{cor:gen-Bobkov} should not mislead the reader: whereas in the former theorems an optimal $\alpha$ would satisfy $\alpha^{-1}(\log 2) = 0$, $\beta^{-1}(\log 2)$ in the context of the latter corollary will typically be large, and as explained in Section \ref{sec:conclude}, in fact dimension dependent. For instance, in the case of the Gaussian measure $\gamma_n$ on $(\Real^n,\abs{\cdot})$, it is easy to check that $\beta^{-1}(\log 2) \geq c \sqrt{n}$, and so Corollary \ref{cor:gen-Bobkov} yields a bad dimension dependence, whereas the actual isoperimetric inequality satisfied by $(\Real^n,\abs{\cdot},\gamma_n)$ is dimension independent, as recovered by using Theorem \ref{thm:1} with $\alpha(r) \geq c r^2$.

\smallskip

The first dimension \emph{independent} result in this spirit was recently obtained in our previous work \cite{EMilman-RoleOfConvexityCRAS,EMilman-RoleOfConvexity}. It was shown in \cite{EMilman-RoleOfConvexity} that under the convexity assumptions, arbitrarily weak concentration (arbitrary slow $\alpha$ increasing to infinity) implies a linear isoperimetric inequality $\tilde{\I}(v) \geq c_\alpha v$, with $c_\alpha$ depending solely on $\alpha$. Theorem \ref{thm:1} significantly extends our previous result, by showing that whenever $\alpha$ increases faster than linearly (corresponding to stronger-than-exponential concentration), one deduces a better-than-linear isoperimetric inequality, with essentially optimal dependence on $\alpha$. Moreover, we will see in Section \ref{sec:applications} that we can actually also recover (a slightly stronger version of) our main result from \cite{EMilman-RoleOfConvexity}. Theorem \ref{thm:2} extends all these results to the negative curvature setting, which is very useful for applications, and which was well beyond the reach of our previous approach.

Further applications of Theorems \ref{thm:1} and \ref{thm:2},
pertaining to stability and other properties of isoperimetric, functional (e.g. Poincar\'e, log-Sobolev, etc.) and transportation cost inequalities, and to optimal inequalities on compact manifolds-with-density (as a function of the diameter and $\kappa$)
will be deferred to \cite{EMilmanGeometricApproachPartII,EMilmanGeometricApproachPartIII}.
We note that the dimension independence feature was crucial for the applications of \cite{EMilman-RoleOfConvexity}, and is equally cardinal to this work as well.

\medskip

Contrary to our previous approach, which involved diffusion semi-group estimates following Bakry--Ledoux \cite{BakryLedoux} and Ledoux \cite{LedouxSpectralGapAndGeometry},
the proofs of Theorems \ref{thm:1} and \ref{thm:2} are purely \emph{geometric}, based on a generalized version of the Heintze--Karcher comparison theorem due to F. Morgan \cite{MorganManifoldsWithDensity} (see also Bayle \cite{BayleThesis}).
In this sense, we return to the original geometric approach of M. Gromov \cite{GromovGeneralizationOfLevy},\cite[Appendix C]{Gromov}, who generalized P. L\'evy's isoperimetric inequality on the sphere to manifolds with positive Ricci curvature. The L\'evy--Gromov approach for obtaining isoperimetric inequalities on manifolds was subsequently employed in the 80's by several authors, including Buser \cite{BuserReverseCheeger}, Gallot \cite{GallotIsoperimetricInqs} and others.  Recently, Morgan extended the setting to allow for non-Riemannian densities, and obtained a geometric proof of the isoperimetric inequality of Bakry--Ledoux \cite{BakryLedoux} for manifolds-with-density satisfying a $CD(\rho,\infty)$ condition ($\rho > 0$), characterizing in addition the equality cases. In particular, this recovers the Gaussian isoperimetric inequality and its equality cases (the latter were first obtained by Ehrhard \cite{EhrhardGaussianIsopEqualityCases} and Carlen--Kerce \cite{CarlenKerceEqualityInGaussianIsop}).
In contrast to the positively curved case ($\rho>0$), where a comparison to a model space is available (see Section \ref{sec:conclude}), our semi-convexity setting $(\rho = -\kappa \leq 0)$ requires a more delicate analysis, to which end we also fully exploit the first variation of area and its consequences.

\subsection{Organization}

The rest of this work is organized as follows. In Section \ref{sec:prelim} we recall some geometric preliminaries which will lie at the heart of our argument. In Section \ref{sec:1vs2} we provide a first example of how the Heintze--Karcher theorem may be used together with the first variation of area, which will be useful later on. In Sections \ref{sec:thm1} and \ref{sec:thm2} we provide the proofs of Theorems \ref{thm:1} and \ref{thm:2}, respectively, under our additional smoothness assumptions. These assumptions are removed by an approximation argument detailed in Section \ref{sec:aa}. In Section \ref{sec:applications} we show how Theorem \ref{thm:1} and the result of Section \ref{sec:1vs2} may be used to give a direct proof of the Main Theorem from \cite{EMilman-RoleOfConvexity}, and provide a proof of Corollary \ref{cor:gen-Bobkov}. We provide some concluding remarks in Section \ref{sec:conclude}. Weaker versions of Theorems \ref{thm:1} and \ref{thm:2} were announced in \cite{EMilmanGeometricApproachCRAS}.

\medskip

\noindent \textbf{Acknowledgements.} I would like to thank Franck Barthe for his support, interest and comments, sharing his knowledge, and providing many relevant references. I would also like to thank Sasha Sodin, Michel Ledoux and Sergey Bobkov for their interest and comments, and Aryeh Kontorovich who helped me regain interest in this problem. I am also thankful to Jean Bourgain for providing motivation and encouragement. Finally, I thank the anonymous referees for their careful reading and numerous comments, which have helped improve the presentation of this work.

\section{Geometric Preliminaries} \label{sec:prelim}

In this section, we recall some known facts from Geometric Measure Theory and Riemannian Geometry which will play a fundamental role in this work. We refer to \cite{FedererBook,MorganBook4Ed,GiustiBook} for further information on the remarkable results provided by Geometric Measure Theory on the existence and regularity of isoperimetric minimizers, to \cite{GHLBook} for basic background in Riemannian Geometry, and to \cite{BuragoZalgallerBook} for a good combination of both topics. A good expository paper on the results stated in this section is that of F. Morgan \cite{MorganManifoldsWithDensity} (also summarized in the recent \cite[Chapter 18]{MorganBook4Ed}); in fact, our main goal will be to describe some of the details underlying the sketched proof of \cite[Theorem 2, Remark 3]{MorganManifoldsWithDensity}. We refer to \cite{BayleThesis} and \cite[p. 216]{BuserReverseCheeger} for careful formal verifications of some of the statements in \cite{MorganManifoldsWithDensity} and to \cite[Appendix A]{EMilman-RoleOfConvexity} for some further details. To facilitate the exposition, we choose to proceed in a non-formal style, but emphasize that all of the details appear in the cited references above. The entire approach we describe here is due to Gromov \cite{GromovGeneralizationOfLevy}, with further recent generalization due to Morgan \cite{MorganManifoldsWithDensity}.

Throughout this section, we assume that $\Omega$ is a complete oriented $n$-dimensional ($n \geq 2$) smooth Riemannian manifold $(M,g)$, that $d$ is the induced geodesic distance, and that $\mu = \exp(-\psi) dvol_M$ where $\psi$ is a $C^2$ function.

\subsection{Generalized Heintze--Karcher Theorem} \label{subsec:H-K}

The first ingredient we will need is a generalization of the Heintze--Karcher theorem (\cite{HeintzeKarcher},\cite[4.21]{GHLBook}), which is a classical volume comparison theorem in Riemannian Geometry when there is no density present ($\psi=0$). Given a $C^2$ oriented hypersurface $S$ in $(M,g)$, the classical theorem bounds the volume of the one-sided neighborhood of $S$ in terms of the mean-curvature of $S$ and a lower bound on the Ricci curvature of $M$. Recall that the \emph{mean curvature} of $S$ at $x$ is just the trace of the second fundamental form $II_{S,x}$ divided by $n-1$, the dimension of $S$.
We conform to the following \emph{non-standard} convention for specifying the latter's sign: the second fundamental form of the sphere in Euclidean space with respect to the \emph{outer} normal is \emph{positive} definite (formally: $II^\nu_{S,x}(u,v) = g(D_u \nu , v)$ for $u,v \in T_x S$, where $D$ is the covariant derivative). It will be more convenient to work with the trace itself (without dividing by $n-1$), which we will call the \emph{total-curvature} of $S$ at $x$, and denote by $H^\nu_S(x)$.
The following generalization to the case of manifolds-with-density is due to Morgan \cite{MorganManifoldsWithDensity}. Other very precise generalizations were also obtained by Bayle \cite[Appendix E]{BayleThesis}. We first introduce the following:

\begin{dfn*}
The $\mu$-total-curvature of $S$ at $x \in S$ with respect to $\nu$, denoted $H_{S,\mu}^\nu(x)$, is defined as:
\[
H_{S,\mu}^\nu(x) := H_S^\nu(x) - \frac{\partial \psi}{\partial \nu}(x) ~.
\]
\end{dfn*}

\begin{thm}[Generalized Heintze--Karcher (Morgan)] \label{thm:gen-HK}
Let $S$ denote a $C^2$ oriented hypersurface in $(M,g)$, and suppose that for $\kappa \in \Real$:
\[
Ric_g + Hess_g \psi \geq \kappa g ~.
\]
Denote by $V_\mu(r)$ the $\mu$-measure of the region within distance $r>0$ of $S$ on the side of the normal unit vector field $\nu$. Then:
\[
V_\mu(r) \leq \int_S \int_0^{r} \exp(H_{S,\mu}^\nu(x) t - \kappa t^2 /2) dt \; dvol_{S,\mu}(x) ~,
\]
where $dvol_{S,\mu} = \exp(-\psi(x)) dvol_S(x)$.
\end{thm}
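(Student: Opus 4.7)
My plan is to reduce the statement to a one-dimensional comparison along geodesics emanating normally from $S$, in the spirit of the classical Heintze--Karcher proof, and then to exploit the Bakry--\'Emery tensor in place of Ricci so that the dimension-dependent trace term can be thrown away for free.

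First I would parametrize the one-sided tube by the normal exponential map $F : S \times [0,r] \to M$, $F(x,t) = \exp_x(t\nu(x))$, and use the area/coarea formulas to write
\[
V_\mu(r) \leq \int_S \int_0^{r} e^{-\psi(F(x,t))}\, J(x,t)\, dt\, dvol_S(x),
\]
where $J(x,t)$ is the Jacobian of $F$. The inequality (rather than equality) is due to the cut locus: each point of the one-sided neighborhood is covered at least once by $F$ on $\{t < \operatorname{cut}(x,\nu)\}$, and beyond the cut time the integrand on the right is still nonnegative. A careful formal justification of this step is standard and is recorded in the references cited in the paper (e.g.\ \cite{BayleThesis}); I would cite rather than redo it.

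Next, to match the desired conclusion, it suffices to show the pointwise bound
\[
e^{-\psi(F(x,t))}\, J(x,t) \;\leq\; e^{-\psi(x)} \exp\!\left(H_{S,\mu}^{\nu}(x)\, t - \tfrac{\kappa}{2} t^2\right),
\]
for each $x \in S$ and each $t$ less than the focal/cut time. Taking logarithms, set
\[
y(t) := \log J(x,t) - \bigl(\psi(F(x,t)) - \psi(x)\bigr),
\]
so that $y(0) = 0$. The Jacobi-field representation of $J$ along the geodesic $\gamma(t) = F(x,t)$ yields $J'/J = \operatorname{tr}(U(t))$, where $U(t)$ is the shape operator of the equidistant hypersurface to $S$ at distance $t$, and our sign convention gives $\operatorname{tr}(U(0)) = H_S^{\nu}(x)$. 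Consequently $y'(0) = H_S^{\nu}(x) - \partial_\nu \psi(x) = H_{S,\mu}^{\nu}(x)$, matching the linear term.

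The main calculation is the second derivative. The Riccati equation $U' + U^2 + R_{\gamma'} = 0$ along the geodesic, after taking trace, gives
\[
\left(\tfrac{J'}{J}\right)' = -\operatorname{tr}(U^2) - \mathrm{Ric}_g(\gamma',\gamma'),
\]
while a second derivative of $\psi\circ\gamma$ produces the $\mathrm{Hess}_g\psi(\gamma',\gamma')$ term. Combining,
\[
y''(t) = -\operatorname{tr}(U(t)^2) - \bigl(\mathrm{Ric}_g + \mathrm{Hess}_g\psi\bigr)(\gamma',\gamma') \;\leq\; -\operatorname{tr}(U(t)^2) - \kappa \;\leq\; -\kappa,
\]
where the last step simply drops the nonnegative Cauchy--Schwarz term $\operatorname{tr}(U^2) \geq (\operatorname{tr} U)^2/(n-1)$. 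This is exactly where the dimension disappears: in the classical Heintze--Karcher one keeps this term to get a sharper, $n$-dependent model comparison, but here the Bakry--\'Emery hypothesis already absorbs what would have been the dimensional contribution from $\mathrm{Hess}\,\psi$, so discarding $\operatorname{tr}(U^2)$ is harmless. Integrating $y'' \leq -\kappa$ twice from $0$ with $y(0)=0$ and $y'(0) = H_{S,\mu}^{\nu}(x)$ gives $y(t) \leq H_{S,\mu}^{\nu}(x)\, t - \kappa t^2/2$, which is the required pointwise estimate; plugging it into the coarea inequality completes the proof.

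I expect the genuinely hard part to be \emph{not} the calculation above (which is a short ODE comparison) but the justification of the cut-locus step and the treatment of focal points where $J$ vanishes. The calculation breaks down at the first focal time $t_{\mathrm{foc}}(x)$; past this point one must either observe that $F(x,\cdot)$ no longer contributes new mass to $V_\mu$, or truncate integration at $\min(r, t_{\mathrm{foc}}(x))$ and note that the integrand on the right-hand side of the claimed bound remains nonnegative, so extending to $r$ only weakens the inequality. As stated in the paper, I would rely on the detailed accounts in \cite{BayleThesis,MorganManifoldsWithDensity,EMilman-RoleOfConvexity} for these measure-theoretic technicalities rather than reproducing them.
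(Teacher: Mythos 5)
Your argument is correct and is essentially the same proof that the paper delegates to its references: the paper states this result as Morgan's theorem without reproving it, and the cited accounts (Morgan's sketch and Bayle's careful version) proceed exactly as you do, via the normal exponential map, the traced Riccati equation giving $y'' \leq -(\mathrm{Ric}_g+\mathrm{Hess}_g\psi)(\gamma',\gamma') \leq -\kappa$ after discarding $\operatorname{tr}(U^2)\geq 0$, and the cut/focal-locus covering argument. Your identification of the cut-locus and focal-point bookkeeping as the only genuinely delicate step, handled by truncation and nonnegativity of the right-hand integrand, matches the treatment in those references.
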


\subsection{Existence and Regularity of Isoperimetric Minimizers} \label{subsec:e-r}

The second ingredient we will need is the existence and regularity theory of isoperimetric minimizers on manifolds-with-density, provided by Geometric Measure Theory. An isoperimetric minimizer in $(\Omega,d,\mu)$ of given measure $v \in (0,1)$ is a Borel set $A \subset \Omega$ with $\mu(A) = v$ for which the following infimum is attained:
\[
\mu^+(A) = \inf \set{ \mu^+(B) \;;\; \mu(B) = v } ( \; = \I_{(\Omega,d,\mu)}(v) \; ) ~.
\]
In general, isoperimetric minimizers of given measure need not necessarily exist (consider an absolutely continuous measure $\mu$ on $\Real^2$ whose density is not continuous). Fortunately, in the setup of this section, isoperimetric minimizers of any given measure always exist.

Indeed, given a complete smooth oriented $n$-dimensional Riemannian manifold $(M,g)$ with positive density $\rho \in C^k(M)$ ($k \geq 0$) and \emph{finite} weighted volume $V = \int_M \rho < \infty$, and given $0< v <V$, the local compactness theorem for currents (see \cite[Sections 5.5,9.1]{MorganBook4Ed}, \cite{MorganManifoldsWithDensity}, \cite[Chapter 2]{MorgansStudentThesis}) guarantees the existence of a locally integral current with $\rho$-weighted volume $v$ whose boundary minimizes $\rho$-weighted area. It was shown by Morgan in \cite[Remark 3.10]{MorganRegularityOfMinimizers} that such an area minimizing current must be $C^k$ regular outside a set of Hausdorff dimension $n-8$, extending to the manifold-with-density setting previous regularity results of
Almgren, Bombieri, Federer, Fleming, Gonzalez--Massari--Tamanini, Simons  and others (see \cite[Chapter 8]{MorganBook4Ed} and the references therein); in fact, it was shown by
Bombieri--De Giorgi--Giusti \cite{BombieriDeGiorgiGiusti} that the codimension $8$ above is sharp. It is easy to verify that
$\mu^+(A) = \int_{S} \rho \; dvol_S$ for any Borel set $A \subset M$ with $\mu^+(A) < \infty$ such that $S \subset \partial A$ is sufficiently regular (say $C^2$ smooth) and $\partial A \setminus S$ has Hausdorff codimension strictly greater than $1$ (see \cite[pp. 32-33]{BayleThesis} for more on different ways to define the boundary measure). It therefore follows that the weighted area of the minimizing current's boundary and the Minkowski boundary measure of its support coincide (say for $k \geq 2$). In our setup, the probability measure $\mu$ has weighted volume 1 and $C^2$ density, so we conclude the existence of an isoperimetric minimizer of any given measure (it may be assumed to be an open set), whose boundary is in fact $C^2$ outside a singular set of dimension $n-8$ (compare with \cite[Proposition 3.4.11]{BayleThesis}). The complement of the singular set on the boundary will be called the regular part.

\subsection{First Variation of Area}

Let $S$ denote a $C^2$ oriented hypersurface in $(M,g)$, and let $\Phi_u : S \times (-\eps,\eps) \rightarrow M$ denote a $C^2$ normal variation of compact support and constant velocity $u(x)$ along the unit normal vector field $\nu$ (so $\Phi_u(x,t) = \exp_x(t u(x) \nu(x))$). It is well known (e.g. \cite[5.20]{GHLBook}) when $\mu$ is the Riemannian volume that the first variation of area of $\Phi_u(S,t)$ at $t=0$ is given by integrating the total-curvature: $\delta^1(u) = \int_S H^\nu_S(x) u(x) dvol_S(x)$. It is easy to generalize this to handle more general measures $\mu$ (see \cite[3.4.6]{BayleThesis}, \cite[Proposition 7]{MorganManifoldsWithDensity}), in which case the first variation of the $\mu$-weighted area is controlled by the $\mu$-total-curvature: $\delta^1(u) = \int_S H^\nu_{S,\mu}(x) u(x) dvol_{S,\mu}(x)$. Contrary to other approaches, we will not require the second variation, which involves a far more complicated formula.

It follows immediately by a Lagrange multiplier argument that the regular part $S$ of the boundary of any isoperimetric minimizer $A$ must have constant $\mu$-total-curvature (see \cite[3.4.11]{BayleThesis}); we will denote this constant curvature by $H_\mu(A)$.
By definition, the graph of the isoperimetric profile $\I = \I_{(\Omega,d,\mu)}$  cannot lie above the curve $(-\eps,\eps) \ni t \rightarrow (\mu(A_t),\mu^+(A_t))$, and since they touch at $t=0$, they must be tangent at $(\mu(A), \mu^+(A)) = (v,\I(v))$. The slope of the curve at $t=0$ is just the ratio between the first variations of the boundary-measure and the measure, which is exactly $H_\mu(A)$. Consequently (see e.g. \cite{BavardPansu},\cite{MorganJohnson},\cite[Lemma 3.4.12]{BayleThesis}), we deduce:
\begin{prop}[folklore] \label{prop:H-is-derivative}
Let $A$ denote an isoperimetric minimizer of measure $v \in (0,1)$. Then:
\[\limsup_{\eps\rightarrow 0+} \frac{\I(v+\eps) - \I(v)}{\eps} \leq H_\mu(A) \leq
\liminf_{\eps\rightarrow 0-} \frac{\I(v+\eps) - \I(v)}{\eps} ~.
\]\end{prop}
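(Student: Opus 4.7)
The plan is to exploit a one-parameter normal variation of the regular part $S$ of $\partial A$ and compare the resulting family of perturbed sets to the graph of $\I$. By the regularity theory recalled in Section \ref{subsec:e-r}, the singular part of $\partial A$ has Hausdorff dimension at most $n-8$ and in particular does not contribute to either the first variation of measure or of boundary $\mu$-area; all variations can therefore be carried out on $S$.

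First I would pick a smooth, compactly supported $u : S \to \Real_+$ with $\int_S u\, dvol_{S,\mu} > 0$, and form the variation $\Phi_u(x,t) = \exp_x(t u(x) \nu(x))$ for $t \in (-\eps_0,\eps_0)$, letting $A_t$ denote the resulting perturbed open set (with $A_0 = A$). The basic computation is the standard one:
\[
 \left.\frac{d}{dt}\right|_{t=0} \mu(A_t) = \int_S u\, dvol_{S,\mu} =: M_u > 0,
\]
while by the first variation of $\mu$-weighted area, together with the fact that the regular part of $\partial A$ has constant $\mu$-total-curvature $H_\mu(A)$,
\[
 \left.\frac{d}{dt}\right|_{t=0} \mu^+(A_t) = \int_S H^\nu_{S,\mu}(x)\, u(x)\, dvol_{S,\mu}(x) = H_\mu(A)\, M_u .
\]
Setting $\eps(t) := \mu(A_t) - v$, one has $\eps(t) = t M_u + o(t)$, so $\eps$ changes sign with $t$, and $\eps/t \to M_u > 0$.

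Next I would use the defining inequality of the isoperimetric profile, $\I(\mu(A_t)) \leq \mu^+(A_t)$, together with the equality $\I(v) = \mu^+(A)$ at $t=0$, to obtain
\[
 \I(v + \eps(t)) - \I(v) \leq \mu^+(A_t) - \mu^+(A) = t H_\mu(A) M_u + o(t).
\]
For $t > 0$ small, dividing by $\eps(t) > 0$ preserves the inequality, and passing to $\limsup$ as $t \to 0+$ yields $\limsup_{\eps \to 0+} (\I(v+\eps) - \I(v))/\eps \leq H_\mu(A)$. For $t < 0$ small, $\eps(t) < 0$ so dividing reverses the inequality, and taking $\liminf$ as $t \to 0-$ gives $\liminf_{\eps \to 0-} (\I(v+\eps) - \I(v))/\eps \geq H_\mu(A)$. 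The factor $M_u$ cancels between numerator and denominator in both cases, which is precisely why the choice of $u$ is irrelevant as long as $M_u \neq 0$.

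The main (minor) obstacle is purely technical: one must justify that the smooth compactly supported normal variation on the regular part $S$ genuinely produces a family $A_t$ whose $\mu$-volume and Minkowski $\mu^+$-boundary measure are differentiable at $t=0$ with the claimed derivatives, in spite of the low-dimensional singular set of $\partial A$. This is standard in geometric measure theory and is handled by noting that the singularity has Hausdorff codimension strictly greater than $1$, so it contributes nothing to $\mu^+$, and that the flow $\Phi_u$ leaves a neighborhood of the singular set untouched by the compact support of $u$; the derivatives are then computed on the smooth piece $S$ by the classical first-variation formulas cited above (see \cite[3.4.6, 3.4.11--3.4.12]{BayleThesis} and \cite[Proposition 7]{MorganManifoldsWithDensity}).
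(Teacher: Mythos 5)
Your argument is correct and is essentially the paper's own argument: the paper asserts that the graph of $\I$ cannot lie above the curve $t \mapsto (\mu(A_t),\mu^+(A_t))$, that the two touch at $t=0$, and that the slope of the curve there is the ratio of the first variations, which equals $H_\mu(A)$. You simply unwind the tangency into the two one-sided inequalities using the first-variation formulas (citing the same references the paper does for the GMT technicalities around the singular set), which is exactly what the paper's informal paragraph leaves implicit.
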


For all practical purposes, the reader should think of $H_\mu(A)$ as the ``derivative" $\I'(v)$. In fact,
by deriving a second-order differential inequality for $\I$ using the second variation, it is possible to show (\cite{BavardPansu},\cite{MorganJohnson},\cite{BayleThesis},\cite{BayleRosales}) that under our smooth $\kappa$-semi-convexity assumptions, $\I$ is locally semi-concave, implying in particular that the right and left derivatives of $\I$ exist (and hence, coincide with the limits above) for all $v \in (0,1)$, and are equal to each other almost everywhere in $(0,1)$.
Nevertheless, we will refrain from using the second variation or any of its consequences, whose proofs in the non-compact manifold-with-density setting lead to numerous complications and technicalities, and will not require all of this for our approach.

\subsection{Combining Everything} \label{subsec:combining}

It is easy to see that Theorem \ref{thm:gen-HK} does not hold in general when $S$ is a non-smooth hypersurface (even in the presence of a single point singularity), so further justification is required before applying it to $\partial A$, the boundary of an isoperimetric minimizer $A$, which may have singularities. It was first observed by Gromov in \cite{GromovGeneralizationOfLevy} that the structure of these singularities allows such justification.
Indeed, it is known that the regular part of $\partial A$ coincides with the set of all points in $\partial A$ whose tangent cone is contained in a half space (see e.g. \cite{MorganBook4Ed}). This means that $p \in M \setminus \partial A$ will always be reached by a normal ray from a regular point in $\partial A$ (any closest one to $p$). Consequently, Theorem \ref{thm:gen-HK} still applies with $S$ denoting the regular part of $\partial A$ and $V_\mu(r)$ denoting the $\mu$-measure of the set $A_r \setminus A$ (in the case of an outer normal field $\nu$).

\medskip

Combining all the ingredients in this section, we obtain the following theorem, due to Morgan \cite[Theorem 2,Remark 3]{MorganManifoldsWithDensity}, which will serve as our main tool in this work.

\begin{thm}[Morgan] \label{thm:tool}
Assume that for $\kappa \in \Real$:
\[
Ric_g + Hess_g \psi \geq -\kappa g ~.
\]
Let $A \subset M$ denote an isoperimetric minimizer in $(M,g,\mu)$ of given measure $v \in (0,1)$.
Let $S$ denote the regular part of $\partial A$, and let $H_\mu(A)$ denote the constant $\mu$-total-curvature of $S$ with respect to the outer unit normal vector field $\nu$ on $S$. Then for any $r>0$:
\[
\mu(A_r) - \mu(A) \leq \mu^+(A) \int_0^r \exp(H_\mu(A) t + \kappa t^2 / 2) dt ~.
\]
\end{thm}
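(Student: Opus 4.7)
The plan is to assemble Theorem \ref{thm:gen-HK} with the two structural facts about isoperimetric minimizers that have just been recalled: (i) the regular part $S$ of $\partial A$ carries constant $\mu$-total-curvature (by the Lagrange multiplier consequence of the first variation formula), and (ii) the singular part of $\partial A$ is small enough that $S$ sees the entire one-sided neighborhood of $A$.

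First I would apply Theorem \ref{thm:gen-HK} directly to the regular part $S$ of $\partial A$, oriented by the outer unit normal $\nu$, with the lower bound $\kappa$ replaced by $-\kappa$. This yields
\[
V_\mu(r) \;\le\; \int_S \int_0^r \exp\bigl(H^\nu_{S,\mu}(x)\,t + \kappa t^2/2\bigr)\,dt\;dvol_{S,\mu}(x),
\]
where $V_\mu(r)$ denotes the $\mu$-measure of the set swept out by normal geodesics from $S$ of length at most $r$. Because $H^\nu_{S,\mu}$ is constant on $S$ and equal to $H_\mu(A)$ (by the first variation argument of Section 2.3), the inner integrand no longer depends on $x$, so the double integral factors as
\[
V_\mu(r) \;\le\; \Bigl(\int_S dvol_{S,\mu}\Bigr)\cdot\int_0^r \exp\bigl(H_\mu(A)\,t + \kappa t^2/2\bigr)\,dt.
\]

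The remaining task is to identify the left-hand side with $\mu(A_r)-\mu(A)$ and the prefactor $\int_S dvol_{S,\mu}$ with $\mu^+(A)$. For the latter, the singular set of $\partial A$ has Hausdorff dimension at most $n-8$, hence codimension strictly greater than $1$, so it contributes nothing to the Minkowski boundary measure; by the regularity discussion in Section~\ref{subsec:e-r}, $\mu^+(A) = \int_S dvol_{S,\mu}$. For the former, I invoke Gromov's observation (recalled in Section \ref{subsec:combining}): at every regular point of $\partial A$ the tangent cone sits inside a half-space, so any point $p \in A_r \setminus A$ has a nearest point in $\overline A$ that must lie on the regular part $S$, and the minimizing geodesic from that nearest point to $p$ is a normal ray of length less than $r$. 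Consequently every point of $A_r \setminus A$ is covered by the normal exponential map from $S$, so $V_\mu(r) \geq \mu(A_r) - \mu(A)$, which suffices to conclude.

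The main obstacle is not analytic but organizational: one has to be sure Theorem \ref{thm:gen-HK}, which as stated assumes a genuinely $C^2$ hypersurface, can be applied to $S$ (which is $C^2$ but incomplete, being only the regular part of $\partial A$), and that the ``missing'' singular part does not secretly leak some $\mu$-mass into $A_r \setminus A$ that is not reached by normal rays from $S$. Both difficulties are handled by the low-dimensional singular set estimate together with the half-space tangent cone property, which were designed precisely for this purpose; I would not reprove them but invoke the references \cite{MorganManifoldsWithDensity,BayleThesis,MorganBook4Ed} as done in the paper. No use of the second variation of area or of semi-concavity of $\I$ is needed.
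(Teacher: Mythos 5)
Your proposal is correct and follows essentially the same route as the paper's own justification in Section \ref{subsec:combining}: apply Theorem \ref{thm:gen-HK} (with lower bound $-\kappa$) to the regular part $S$, use the constancy of $H^\nu_{S,\mu} = H_\mu(A)$ from the first variation, identify $\int_S dvol_{S,\mu}$ with $\mu^+(A)$ via the codimension-$\geq 8$ singular set, and invoke Gromov's observation that nearest points on $\partial A$ to exterior points are regular, so normal rays from $S$ cover $A_r \setminus A$. Nothing essential is missing.
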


\begin{rem} \label{rem:closure}
Since $\overline{A_r} = \cap_{\eps > 0} A_{r+\eps}$, the above estimate is also valid when $A_r$ is replaced by its closure $\overline{A_r}$.
\end{rem}

\subsection{Further Remarks}

Under the smoothness assumptions of this section on the density of $\mu$, it is clear that $\mu^+(A) = \mu^+(\Omega \setminus A)$ for a set $A$ whose boundary is sufficiently well-behaved. As explained in Subsection \ref{subsec:e-r}, the regularity of the isoperimetric minimizers is enough to ensure this, and so we conclude that $A$ is a minimizer of measure $v$ if and only if $\Omega \setminus A$ is a minimizer of measure $1-v$. Moreover, it was shown in \cite[Section 6]{EMilman-RoleOfConvexity} that when $\mu$ has density which is only locally bounded from above, then $\I$ is continuous and hence symmetric about $1/2$ (i.e. $\I(v) = \I(1-v)$), even though isoperimetric minimizers may not exist in general, nor satisfy $\mu^+(A) = \mu^+(\Omega \setminus A)$. We remark that it follows from the proof of \cite[Lemma 6.1]{EMilmanRoleOfConvexityInFunctionalInqs} that this condition holds (even after taking limit in the total-variation metric) under our convexity assumptions, and the same proof applies to our semi-convexity ones as well. Consequently, we may always assume throughout this work that $\I$ is continuous and symmetric about $1/2$.

\section{First vs. Second Variation} \label{sec:1vs2}

Let us start by proving the following easy observation, which will be useful later on.

\begin{prop} \label{prop:weak-concave}
Under our smooth convexity assumptions, $\I(v)/v$ is non-increasing on $(0,1)$.
\end{prop}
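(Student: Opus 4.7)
The plan is to apply Theorem \ref{thm:tool} not to an isoperimetric minimizer $A$ directly, but to its complement $B = \Omega\setminus A$, which turns out to be the efficient move. Given $v \in (0,1)$, first I would use Subsection \ref{subsec:e-r} to produce an isoperimetric minimizer $A$ of measure $v$, and note that $B := \Omega \setminus A$ is then an isoperimetric minimizer of measure $1-v$. The outer unit normal to $B$ along the regular part of $\partial B = \partial A$ is the negative of the outer unit normal to $A$; by the sign conventions of Subsection \ref{subsec:H-K} this flips the sign of both the total-curvature and of $\partial\psi/\partial\nu$, so $H_\mu(B) = -H_\mu(A)$, while $\mu^+(B) = \mu^+(A) = \I(v)$.

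Applying Theorem \ref{thm:tool} to $B$ with $\kappa = 0$ then yields
\[
\mu(B_r) - (1-v) \;\leq\; \I(v) \int_0^r \exp\bigl(-H_\mu(A)\, t\bigr)\, dt \qquad \forall r > 0.
\]
Since $M$ is complete and connected, every point of $M$ lies at finite distance from $B$, so $B_r \nearrow M$ and $\mu(B_r) \to 1$ as $r \to \infty$. Passing to the limit and splitting into two cases: if $H_\mu(A) \leq 0$ then $H_\mu(A) \leq \I(v)/v$ trivially, while if $H_\mu(A) > 0$ the integral converges to $1/H_\mu(A)$ and the inequality becomes $v \leq \I(v)/H_\mu(A)$, giving again $H_\mu(A) \leq \I(v)/v$. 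Combining this with Proposition \ref{prop:H-is-derivative} yields the upper-right Dini-derivative estimate
\[
D^+\I(v) \;:=\; \limsup_{\eps\to 0^+}\frac{\I(v+\eps)-\I(v)}{\eps} \;\leq\; H_\mu(A) \;\leq\; \frac{\I(v)}{v}.
\]

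To conclude that $f(v) := \I(v)/v$ is non-increasing on $(0,1)$, a direct computation using continuity of $\I$ (recalled at the end of Subsection \ref{subsec:combining}, and automatic under the smoothness assumed here) gives
\[
D^+ f(v) \;=\; \frac{1}{v}\, D^+\I(v) \;-\; \frac{\I(v)}{v^2} \;\leq\; 0 \qquad \forall v \in (0,1),
\]
after which the standard real-variable fact that a continuous function with nonpositive upper-right Dini derivative is non-increasing finishes the proof. The only nonroutine step is the decision to apply Morgan's comparison inequality to the complement $B$ rather than to $A$: this is what makes $\int_0^\infty e^{-H_\mu(A)t}\,dt$ convergent precisely when $H_\mu(A)>0$, which is exactly the case in which $H_\mu(A) \leq \I(v)/v$ carries content; every other step is essentially formal once $\mu(B_r) \to 1$ has been justified from completeness and connectedness of $M$.
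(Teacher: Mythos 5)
Your proposal is correct and follows essentially the same route as the paper: apply Theorem \ref{thm:tool} (with $\kappa=0$) to the complement $B=\Omega\setminus A$, let $r\to\infty$ using connectedness, use $H_\mu(B)=-H_\mu(A)$ and $\mu^+(B)=\mu^+(A)$ to deduce $H_\mu(A)\leq \I(v)/v$, and then combine with Proposition \ref{prop:H-is-derivative} and continuity of $\I$ to conclude monotonicity of $\I(v)/v$. Your explicit Dini-derivative bookkeeping just spells out the ``differentiating'' step the paper leaves to the reader.
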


In fact, a stronger result is known:
\begin{thm}[Bavard--Pansu,Morgan--Johnson,Sternberg--Zumbrun,Bayle,Bayle--Rosales,\\Morgan,Bobkov] \label{thm:concave}
Under our smooth convexity assumptions, $\I(v)$ is concave on $(0,1)$.
\end{thm}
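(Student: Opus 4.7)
The plan is to compare the isoperimetric profile against a one-parameter family of test sets produced by the normal parallel flow from an isoperimetric minimizer, and to extract from the first and second variations of weighted area a second-order differential inequality on $\I$ which collapses to $\I'' \leq 0$ under the convexity hypothesis. Fix $v_0 \in (0,1)$, let $A_0$ be an isoperimetric minimizer of measure $v_0$ (existence supplied by Subsection \ref{subsec:e-r}), let $S_0$ denote the regular part of $\partial A_0$, and let $\nu$ be the outer unit normal to $S_0$. For small $r \in \Real$, define $A_r$ by the signed normal flow of $A_0$ (inner parallel sets for $r<0$ and outer parallel sets for $r>0$), and set $v(r) := \mu(A_r)$ and $a(r) := \mu^+(\partial A_r)$. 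Away from the focal locus of the normal exponential map these sets have $C^2$ boundary, and the tube formula for measures-with-density yields $v'(r) = a(r)$.

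By Proposition \ref{prop:H-is-derivative}, the $\mu$-total-curvature $H := H_\mu(A_0)$ along $S_0$ is constant and agrees with the one-sided derivatives of $\I$ at $v_0$. The first variation of weighted area then gives $a'(0) = H \cdot \I(v_0)$. A Jacobi-field / tube-formula computation of the second variation of weighted area under a normal flow of constant unit velocity, adapted to the density (see \cite{BayleThesis}), produces
\[
a''(0) \;=\; \int_{S_0} \brac{ H^2 - \norm{II}^2 - (Ric_g + Hess_g\psi)(\nu,\nu)} \, dvol_{S_0,\mu}~,
\]
where $\norm{II}$ denotes the Hilbert--Schmidt norm of the second fundamental form of $S_0$.

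Since $A_r$ is an admissible competitor of measure $v(r)$, the definition of the profile gives $\I(v(r)) \leq a(r)$, with equality at $r = 0$. Differentiating the nonnegative function $r \mapsto a(r) - \I(v(r))$ twice at its minimum $r=0$, and using $v'(0) = a(0) = \I(v_0)$, the $H^2 \I(v_0)$ terms cancel against the corresponding contribution in $a''(0)$, yielding
\[
\I''(v_0)\, \I(v_0)^2 \;\leq\; -\int_{S_0} \brac{\norm{II}^2 + (Ric_g + Hess_g \psi)(\nu,\nu)}\, dvol_{S_0,\mu} ~.
\]
Under the smooth convexity assumption $Ric_g + Hess_g \psi \geq 0$ both terms in the integrand are non-negative, so the right-hand side is non-positive and $\I''(v_0) \leq 0$, which is exactly the claimed concavity.

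The substantive obstacle is regularity: the boundary $\partial A_0$ carries a singular set of Hausdorff codimension at least $8$, the normal exponential map develops a focal locus past which the tube $A_r$ is no longer smooth, and $\I$ is not a priori twice differentiable. I would handle (i) and (ii) by restricting the normal variation to compactly supported pieces of $S_0$ disjoint from the singular stratum and the focal locus, and passing to a suitable limit; and bypass (iii) by recasting the displayed inequality as the geometric assertion that the smooth comparison curve $v \mapsto a(r(v))$, obtained by inverting $v(r)$, is itself concave at $v_0$ and majorizes $\I$ locally with a common tangent at $v_0$. Producing such a local concave majorant at every point of $(0,1)$ yields $\I'' \leq 0$ in the viscosity/distributional sense, which for a continuous function is equivalent to ordinary concavity on $(0,1)$.
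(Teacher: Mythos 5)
The paper offers no proof of Theorem \ref{thm:concave}: it is quoted from the literature precisely because all known proofs require a delicate analysis of the \emph{second} variation of area, which this paper deliberately avoids --- the only statement proved here is the weaker Proposition \ref{prop:weak-concave} (that $\I(v)/v$ is non-increasing), obtained from Theorem \ref{thm:tool} and the first variation alone. So your sketch is not being compared against a proof in the paper; it is essentially the argument of the cited references \cite{BavardPansu,SternbergZumbrun,MorganJohnson,BayleThesis,BayleRosales}: deform the minimizer by a normal variation, compute first and second variations of weighted volume and area, reparametrize by volume to obtain a $C^2$ comparison curve $F\geq \I$ touching $\I$ at $v_0$ with $F''(v_0)\leq -\I(v_0)^{-2}\int_{S_0}\brac{\norm{II}^2+(Ric_g+Hess_g\psi)(\nu,\nu)}dvol_{S_0,\mu}\leq 0$, and deduce concavity from the existence of such barriers at every point. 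The algebra you display (the Riccati-type formula for $a''(0)$ for the unit-speed normal flow and the cancellation of the $H^2\I(v_0)$ terms) is correct.

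Two steps, however, are exactly where the substance lies and are not supplied. First, the move you compress into ``restrict to compactly supported pieces and pass to a suitable limit'' changes the second variation: multiplying the normal velocity by a cutoff $u$ vanishing near the singular set introduces an extra term $\int_{S_0}\abs{\nabla_{S_0}u}^2 dvol_{S_0,\mu}$ absent from your displayed formula, and one must construct cutoffs whose weighted Dirichlet energy tends to zero while the variation still moves volume at unit rate to first order; this is a capacity-type argument relying crucially on the fact that the singular set has Hausdorff codimension at least $7$ inside $\partial A_0$, combined with an implicit-function-theorem reparametrization by volume to produce $F$. (If instead you insist on literal parallel sets $A_r$, then $v(r)$ need not be twice differentiable and $v'(r)$ only dominates the perimeter in general, so the variational route is the one that works; the focal locus is then a non-issue since everything is evaluated at $t=0$.) Second, your final reduction is mislabeled: the standard viscosity formulation of $\I''\leq 0$ tests with functions touching from \emph{below}, which your construction does not provide. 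What you do obtain is an upper barrier at every point, and you then need the elementary lemma that a continuous function admitting at each point a $C^2$ function touching it from above with non-positive second derivative there is concave (proved by adding $\delta(x-x_1)(x_2-x)$ to the difference with a chord and contradicting the interior minimum). With these two ingredients made precise, your outline coincides with the proofs in the cited works --- and, as the paper stresses, it is exactly this machinery that the present argument is designed to bypass via Proposition \ref{prop:weak-concave}.
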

\noindent In dimension $n=1$ (assuming $(M,g) = (\Real,\abs{\cdot})$) this was proved by Bobkov \cite{BobkovExtremalHalfSpaces}, by showing that the isoperimetric minimizers are always given by half-lines $(-\infty,a]$ or $[a,\infty)$. The case $n \geq 2$ was shown by several groups of authors \cite{BavardPansu,MorganJohnson,SternbergZumbrun,BayleThesis,BayleRosales,MorganManifoldsWithDensity} (see also \cite[Appendix A]{EMilman-RoleOfConvexity}), by using the existence and regularity results of isoperimetric minimizers, together with a delicate analysis of the \emph{second} variation of area. Our more elementary approach manages to avoid using the second variation, thereby simplifying the proof of the weaker statement of Proposition \ref{prop:weak-concave}. Since $\I$ is symmetric about $1/2$, Proposition \ref{prop:weak-concave} nevertheless implies the following extremely useful:

\begin{cor} \label{cor:weak-concave}
Under our smooth convexity assumptions, $\inf_{v \in [0,1/2]} \tilde{\I}(v) / v = 2 \I(1/2)$.
\end{cor}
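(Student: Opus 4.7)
The plan is to combine Theorem \ref{thm:tool} (with $\kappa=0$) with the first-variation identification of $H_\mu(A)$ as the "derivative" of $\I$ at $v$ (Proposition \ref{prop:H-is-derivative}), to conclude that the upper right Dini derivative of the function $f(v) := \I(v)/v$ is non-positive on $(0,1)$, and then to upgrade this to monotonicity using continuity of $\I$.

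First, I would fix $v \in (0,1)$ and invoke the existence result from Subsection \ref{subsec:e-r} to produce an isoperimetric minimizer $A$ of measure $v$, noting $\mu^+(A) = \I(v)$. Let $H := H_\mu(A)$ be the constant $\mu$-total-curvature of its regular boundary. I would then apply Theorem \ref{thm:tool} in two ways. Applied to $\Omega \setminus A$, whose constant $\mu$-total-curvature with respect to its outer normal is $-H$, letting $r \to \infty$ (which makes sense provided $-H < 0$, i.e.\ $H > 0$) yields
\[
v = \mu(A) \leq \mu^+(\Omega \setminus A) \int_0^\infty \exp(-Ht)\,dt = \frac{\I(v)}{H},
\]
so $vH \leq \I(v)$. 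If instead $H \leq 0$, then $vH \leq 0 \leq \I(v)$ trivially. Thus in all cases
\[
v \, H_\mu(A) \leq \I(v).
\]

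Next, by Proposition \ref{prop:H-is-derivative}, the upper right Dini derivative satisfies
\[
D^+ \I(v) := \limsup_{\eps \to 0+} \frac{\I(v+\eps) - \I(v)}{\eps} \leq H_\mu(A) \leq \frac{\I(v)}{v}.
\]
A direct manipulation of the difference quotient
\[
\frac{f(v+\eps) - f(v)}{\eps} = \frac{1}{v(v+\eps)}\!\left(v \cdot \frac{\I(v+\eps) - \I(v)}{\eps} - \I(v)\right)
\]
then gives $D^+ f(v) \leq (v H_\mu(A) - \I(v))/v^2 \leq 0$ for every $v \in (0,1)$.

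Finally, since $\I$ is continuous on $(0,1)$ (as recalled in the Further Remarks of Section \ref{sec:prelim}), $f$ is continuous on $(0,1)$, and a continuous function whose upper right Dini derivative is non-positive throughout an interval is non-increasing there (a standard consequence of the Dini-derivative mean value theorem). This gives the conclusion. The main obstacle is psychological rather than technical: one must resist the temptation to treat $H_\mu(A)$ as a true derivative (which would require the second variation machinery we are trying to avoid), and instead work exclusively with the one-sided Dini estimate furnished by Proposition \ref{prop:H-is-derivative}, combined with the Heintze--Karcher-type inequality at $r=\infty$, which crucially is available precisely because $\kappa=0$.
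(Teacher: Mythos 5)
Your proposal is correct and follows essentially the paper's own route: apply Theorem \ref{thm:tool} (with $\kappa=0$) to $\Omega\setminus A$ and let $r\to\infty$ to get $v\,H_\mu(A)\leq \I(v)$, combine with Proposition \ref{prop:H-is-derivative}, and deduce that $\I(v)/v$ is non-increasing on $(0,1)$ --- your Dini-derivative computation is just the rigorous version of the paper's ``differentiating'' remark, and is fine. Do make the final step explicit rather than saying ``this gives the conclusion'': you need the symmetry $\I(v)=\I(1-v)$ (recalled at the end of Section \ref{sec:prelim}) to identify $\tilde{\I}$ with $\I$ on $[0,1/2]$, after which monotonicity of $\I(v)/v$ yields $\inf_{v\in(0,1/2]}\tilde{\I}(v)/v=\I(1/2)/(1/2)=2\I(1/2)$.
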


This observation lies at the heart of the argument used to prove the main results of \cite{EMilman-RoleOfConvexity}. We conclude that establishing the concavity of the isoperimetric profile is not essential for that argument, and that the weaker Proposition \ref{prop:weak-concave} can be used instead, simplifying the overall proof.

\begin{proof}[Proof of Proposition \ref{prop:weak-concave}]
Given an isoperimetric minimizer $A$ of measure $v \in (0,1)$, we apply Theorem \ref{thm:tool} (with $\kappa=0$) to $B = \Omega \setminus A$, which is an isoperimetric minimizer of measure $1-v$, and let $r \rightarrow \infty$. Since $\Omega$ is connected, clearly $\mu(B_r) \rightarrow 1$, and so we deduce:
\[
 1 - \mu(B) \leq \mu^+(B) \int_0^\infty \exp(H_\mu(B) t) dt ~.
\]
Since $\mu^+(B) = \mu^+(A)$ and $H_\mu(B) = -H_\mu(A)$, this amounts to:
\[
\frac{\mu(A)}{\mu^+(A)} \leq  \int_0^\infty \exp(-H_\mu(A) t) dt  ~.
\]
If $H_\mu(A) > 0$, this implies that:
\[
H_\mu(A) \leq \frac{\mu^+(A)}{\mu(A)} = \frac{\I(v)}{v} ~;
\]
otherwise the same statement holds trivially. We conclude by Proposition \ref{prop:H-is-derivative} that:
\begin{equation} \label{eq:weak-concave}
 \limsup_{\eps\rightarrow 0+} \frac{\I(v+\eps) - \I(v)}{\eps} \leq H_\mu(A) \leq \frac{\I(v)}{v} ~.
\end{equation}
Using that $\I$ and hence $\I(v)/v$ are continuous on $(0,1)$, it is then immediate to check that this is equivalent to the statement that $\I(v)/v$ is non-increasing, by ``differentiating'' the latter expression.
\end{proof}

\begin{rem} \label{rem:H-positive}
Theorem \ref{thm:concave} and the symmetry of $\I$ actually imply that under our smooth convexity assumptions, $\I$ is non-decreasing on $(0,1/2)$, so when
$\mu(A)$ is in this range it follows by Proposition \ref{prop:H-is-derivative} that $H_\mu(A) \geq 0$; the opposite inequality holds in the range $(1/2,1)$.
\end{rem}

\section{Proof of Theorem \ref{thm:1}} \label{sec:thm1}

We now proceed to the proof of Theorem \ref{thm:1}. Although it is a particular case of Theorem \ref{thm:2}, we prefer to
provide a direct proof of the former, since the proof of the latter is more involved, and since we shall benefit from several observations developed
at this stage later on.

Recall that our convexity assumptions ensure that $\kappa=0$ in our Curvature-Dimension condition. We proceed under our additional smoothness assumptions - the general case will follow by an approximation argument provided in Section \ref{sec:aa}. Our concentration assumption $\K(r) \geq \alpha(r)$, or:
\[
\mu(B) \geq 1/2 \;\; \Rightarrow \;\; 1 - \mu(B_r) \leq \exp(-\alpha(r)) ~,
\]
is easily seen to be equivalent to:
\begin{equation} \label{eq:conc-reversed}
\mu(A) > \exp(-\alpha(r)) \;\; \Rightarrow \;\; \mu(A_r) > 1/2 ~.
\end{equation}

Given an isoperimetric minimizer $A$ of measure $v \in (0,1/2)$ (hence $\I(v) = \mu^+(A)$),
we will denote:
\[
r_{\alpha,v} := \alpha^{-1}(\log(1/v)) ~.
\]
By approximating $\alpha$ if necessary, we may assume that it is strictly increasing and continuous - we will see that the general case follows from this one in Lemma \ref{lem:alpha-approx}. This ensures that $\alpha^{-1}$ is well defined, strictly increasing and continuous, and that $\alpha \circ \alpha^{-1} = \alpha^{-1} \circ \alpha = id$. It is then easy to check that (\ref{eq:conc-reversed}) implies that $\mu(\overline{A_{r_{\alpha,v}}}) \geq 1/2$.
Applying Theorem \ref{thm:tool} and Remark \ref{rem:closure} to $A$ and $r_{\alpha,v}$, we deduce:
\begin{equation} \label{eq:potential-ODE}
1/2 - v \leq \mu(\overline{A_{r_{\alpha,v}}}) - \mu(A) \leq
\mu^+(A) \int_0^{r_{\alpha,v}} \exp(H_\mu(A) t) dt ~.
\end{equation}
Since $H_\mu(A) \leq \I(v)/v$ by
(\ref{eq:weak-concave}), we conclude that:
\[
1/2 - v \leq \I(v) \; r_{\alpha,v} \exp\brac{\I(v)\frac{r_{\alpha,v}}{v}} ~.
\]
Denoting $f(v) := \I(v) \frac{r_{\alpha,v}}{v}$, we obtain the following inequality:
\begin{equation} \label{eq:f-inq}
f(v) + \log f(v) \geq \log\brac{\frac{1}{2v}-1} ~.
\end{equation}
Since $y + \log y$ is increasing on $\Real_+$, we obtain that $f(v) \geq b(v)$, where $b(v)$ denotes
the (unique) solution to the equality case in (\ref{eq:f-inq}).
Next, given $\lambda \in (0,1/2)$, denote:
\[
c_\lambda := \inf_{\delta \in (0,\lambda]} \frac{b(\delta)}{\log 1/\delta} ~.
\]
It is elementary to check that $c_\lambda \geq C (1/2-\lambda) > 0$, for
some numeric constant  $C > 0$. 
We obtain that:
\[
f(v) \geq b(v) \geq c_\lambda \log 1/v \;\;\; \forall v \in (0,\lambda] ~,
\]
or equivalently, recalling the definition of $f$ and the symmetry of $\I$:
\[
\tilde{\I}(v) \geq c_\lambda v \gamma(\log 1/v) \;\;\; \forall v \in (0,\lambda] ~,
\]
where $\gamma(x) = x / \alpha^{-1}(x)$.

To conclude the proof, we may proceed in one of two (equally legitimate) ways. The first employs the concavity of the isoperimetric profile $\I$ under our convexity assumptions, which implies as in Remark \ref{rem:H-positive} that $\tilde{\I}$ is non-decreasing on $[0,1/2]$, yielding the desired conclusion:
\begin{equation} \label{eq:thm1-conc-strong}
\tilde{\I}(v) \geq \sup_{\lambda \in (0,1/2)} c_\lambda \min(v \gamma(\log 1/v), \lambda \gamma(\log 1/\lambda))
\;\;\; \forall v \in (0,1/2] ~.
\end{equation}
The second avoids employing the concavity of the profile, at the expense of an additional constant in the conclusion. This may be preferable to the first way, since it avoids using the second variation of area, and is slightly more in the spirit of the proof of Theorem \ref{thm:2} for the case $\kappa > 0$, described next, where we do not have the concavity available. Indeed, using only that
$\I(v) /v$ is non-increasing on $(0,1)$ by Proposition \ref{prop:weak-concave}, and the symmetry of $\I$, it follows that for any $v \in [\lambda , 1/2]$:
\[
\frac{\I(v)}{\lambda} \geq \frac{\I(v)}{v} \geq \frac{\I(1-\lambda)}{1-\lambda} = \frac{\I(\lambda)}{1-\lambda} ~,
\]
and hence:
\begin{equation} \label{eq:thm1-conc-weak}
\tilde{\I}(v) \geq \sup_{\lambda \in (0,1/2)} c_\lambda \min\brac{v \gamma(\log 1/v), \frac{\lambda}{1-\lambda} \lambda \gamma(\log 1/\lambda)}
\;\;\; \forall v \in (0,1/2] ~.
\end{equation}
Either way completes the proof.

\begin{rem} \label{rem:ODE}
One may wonder whether a better result could be obtained if, instead of using $H_\mu(A) \leq \I(v)/v$ in (\ref{eq:potential-ODE}), we would use that $H_\mu(A)$ coincides with $\I'(v)$ a.e., thereby obtaining an ordinary differential inequality which one may try to solve. In particular, it is natural to ask whether this would resolve the small discrepancy appearing in (\ref{eq:extra-loglog}) for the case $\I(v) = v \log 1/v$. It seems that the answer is negative, since in that case $\I'(v) = \log 1/v - 1$, which up to constants (when $v$ is small) coincides with $\I(v) / v$.
\end{rem}

\section{Proof of Theorem \ref{thm:2}} \label{sec:thm2}

We can now turn to the proof of Theorem \ref{thm:2}. Under our $\kappa$-convexity-assumptions, neither Theorem \ref{thm:concave} nor Proposition \ref{prop:weak-concave} are valid (when $\kappa > 0$), so we will need to compensate by applying the Heintze--Karcher theorem on \emph{both} sides of our isoperimetric minimizers' boundary, rendering the proof slightly more involved. As usual, we will add our smoothness assumptions and assume that $\alpha$ is strictly increasing and continuous; the general case follows by an approximation argument described in Section \ref{sec:aa}. Consequently, $\alpha^{-1}$ is well defined, strictly increasing and continuous, and $\alpha \circ \alpha^{-1} = \alpha^{-1} \circ \alpha = id$.

Let $A$ denote an isoperimetric minimizer of measure $v \in (0,1/2)$.
We will first obtain the conclusion of Theorem \ref{thm:2} in the range $v \in (0,\lambda_0)$, for some constant $\lambda_0 \in (0,1/2)$, and then extend this to $v \in [\lambda_0,1/2)$ by a different argument.

\subsection{Small Sets} \label{subsec:small-sets}

As in the previous section, our concentration assumption $\K(r) \geq \alpha(r)$ implies that $\mu(\overline{A_{r_{\alpha,v}}}) \geq 1/2$, where $r_{\alpha,v} := \alpha^{-1}(\log 1/v)$. To get the optimal bound on $\delta_0$ in our growth condition (\ref{eq:alpha-cond}), we will also define:
\[
r_+ := \inf\set{ r \geq 0; \mu(\overline{A_r}) \geq 1/2} ~.
\]
Clearly $\mu(\overline{A_{r_+}}) \geq 1/2$ and $0<r_+ \leq r_{\alpha,v}$. Similarly, using the concentration assumption for the set $\Omega \setminus \overline{A_{r_+ - \eps}}$ ($\eps>0$) whose $\mu$-measure is greater than $1/2$, and taking the limit as $\eps \rightarrow 0$, we deduce that $1-\mu((\Omega \setminus A)_{r_{-}}) \leq v/2$ for:
\[
r_- := r_{\alpha,v/2} - r_+ ~.
\]
Applying Theorem \ref{thm:tool} and Remark \ref{rem:closure} to $A$ and $r_+$, and to $\Omega \setminus A$ and $r_{-}$, we obtain:
\[  1/2 - v \leq \mu(\overline{A_{r+}}) - \mu(A) \leq \I(v) \int_0^{r_+} \exp(H_\mu(A) t + \frac{\kappa}{2} t^2) dt ~,
\] \[  v/2 \leq \mu((\Omega \setminus A)_{r-}) - \mu(\Omega \setminus A) \leq \I(v) \int_0^{r_-} \exp(-H_\mu(A) t + \frac{\kappa}{2} t^2) dt ~.
\] Denoting $S_{\pm} := \pm H_\mu(A) + \frac{\kappa}{2} r_{\pm}$, we note that $S_{\pm} > 0$ iff $r_{\pm}$ is to the right of both roots of the parabola $\pm H_\mu(A) t + \frac{\kappa}{2} t^2$. Consequently,
we obtain the following rough estimates:
\begin{equation} \label{eq:I-2}
 \frac{1/2 - v}{\I(v)} \leq \begin{cases} r_+ \exp( S_+ r_+) & S_+ \geq 0 \\ \min(r_+,-4/H_\mu(A)) & S_+ < 0 \end{cases} ~,
\end{equation}
\begin{equation} \label{eq:II-2}
 \frac{v}{2\I(v)} \leq \begin{cases} r_- \exp( S_- r_-) & S_{-} \geq 0 \\ \min(r_-,4/H_\mu(A)) & S_- < 0 \end{cases} ~,
\end{equation}
where the bound of $\mp 4 / H_\mu(A)$ above follows easily by bounding the parabola
by the two secant lines from its roots to its vertex.
We will need to handle three different cases, assuming that $v \in (0,\lambda)$:
\begin{enumerate}
\item \textbf{$S_+ < 0$}. It is immediate to check that $\frac{1/2 - v}{v \log 1/v}$ is decreasing on $(0,1/2)$. Consequently, if $\lambda \in (0,1/2)$, the definition of $\gamma$ and (\ref{eq:I-2}) immediately imply:
\[
 \tilde{\I}(v) \geq \frac{1/2 - v}{r_+} \geq \frac{1/2 - \lambda}{\lambda \log 1/\lambda} \frac{v \log 1/v}{r_{\alpha,v}} = \frac{1/2 - \lambda}{\lambda \log 1/\lambda} v \gamma(\log 1/v)  ~.\]
\item \textbf{$S_+ \geq 0$ and $S_- < 0$}. Observe that $H_\mu(A) \leq 8 \I(v) / v$ by (\ref{eq:II-2}), and so (\ref{eq:I-2}) implies:
\[
 \frac{1}{2v} - 1 \leq \I(v) \frac{r_+}{v} \exp(S_+ r_+) \leq \I(v) \frac{r_+}{v} \exp\brac{ 8 \I(v) \frac{r_+}{v} + \frac{\kappa}{2} r_+^2} ~.
\]
Denoting $f(v) = \I(v) \frac{r_+}{v}$, we obtain:
\[
 8 f(v) + \log f(v) + \frac{\kappa}{2} r_+^2 \geq \log\brac{\frac{1}{2v} - 1} \geq c_\lambda \log 1/v ~, \]
where $c_\lambda := \frac{\log\brac{\frac{1}{2\lambda} - 1}}{\log 1/\lambda} < 1$.
We now use our growth condition (\ref{eq:alpha-cond}) on $\alpha$ and the definition of $r_{\alpha,v}$, which guarantee that when $v \leq \exp(-\alpha(r_0))$, we have:
\[
\delta_0 \kappa r_+^2 \leq \delta_0 \kappa r_{\alpha,v}^2 \leq \alpha(r_{\alpha,v}) = \log 1/v ~.
\]
Choosing $0<\lambda \leq \exp(-\alpha(r_0))$ small enough so that $c_\lambda > \frac{1}{2\delta_0}$, which is always possible whenever $\delta_0 > 1/2$, we conclude that:
\[
 8 f(v) + \log f(v) \geq \brac{c_\lambda - \frac{1}{2\delta_0}} \log 1/v ~. \]
As in the proof of Theorem \ref{thm:1}, it is elementary to show that this implies:
\[
 f(v) \geq \frac{e}{8e+1} \brac{c_\lambda - \frac{1}{2\delta_0}} \log 1/v ~. \]
Recalling the various definitions used, this amounts to:
\[
 \tilde{\I}(v) \geq \frac{e}{8e+1} \brac{\frac{\log\brac{\frac{1}{2\lambda} - 1}}{\log 1/\lambda} - \frac{1}{2\delta_0}} v \gamma(\log 1/v) ~. \]
\item \textbf{$S_+ \geq 0$ and $S_- \geq 0$}. Since $S_- \geq 0$, we know that $H_\mu(A) \leq \frac{\kappa}{2} r_-$, and therefore:
\begin{equation} \label{eq:case3-1}
S_+ \leq \frac{\kappa}{2} (r_- + r_+) \leq \frac{\kappa}{2} r_{\alpha,v/2} ~.
\end{equation}
As in case 2, if $0 < v < \lambda \leq 2 \exp(-\alpha(r_0))$ then our growth condition (\ref{eq:alpha-cond}) on $\alpha$ ensures that:
\[
 \delta_0 \kappa r_{\alpha,v/2}^2 \leq \log 2/v ~,
\]
and we deduce from (\ref{eq:I-2}) that:
\[
 \frac{1/2-v}{\I(v)} \leq r_+ \exp(S_+ r_+) \leq r_{\alpha,v} \exp(\frac{\kappa}{2} r_{\alpha,v/2}^2) \leq r_{\alpha,v} \brac{\frac{2}{v}}^{\frac{1}{2 \delta_0}} ~.
\]
Since we assumed that $\delta_0 > 1/2$, it is easy to verify from this that:
\[
\tilde{\I}(v) \geq \frac{1/2 - \lambda}{2^{\frac{1}{2 \delta_0}}} \frac{v^{\frac{1}{2 \delta_0}}}{r_{\alpha,v}} \geq \frac{1/2 - \lambda}{2} e\brac{1-\frac{1}{2\delta_0}} v \gamma(\log 1/v) ~. \]
\end{enumerate}

Summarizing all three cases, we have:
\begin{equation} \label{eq:small}
 \tilde{\I}(v) \geq c_{\delta_0,\lambda_0} v \gamma(\log 1/v) \;\;\; \forall v \in (0,\lambda_0) ~,
\end{equation}
with:
\begin{equation} \label{eq:lambda_0}
\lambda_0 = \lambda_0(\delta_0,\alpha(r_0)) := \min\brac{\exp(-\alpha(r_0)),\exp\brac{-\frac{\log 16}{1 - \frac{1}{2\delta_0}}}} ~,
\end{equation}
and:
\begin{equation} \label{eq:c_0}
c_{\delta_0} := \frac{e}{16e+2} \brac{1 - \frac{1}{2\delta_0}} ~.
\end{equation}

\subsection{Large Sets} \label{subsec:large-sets}

We will now extend this to the range $v \in [\lambda_0,1/2)$. Denote:
\[
r_{e,v} := r_{\alpha,v} + r_{\alpha,1/4} ~.
\]
Since $\mu(\overline{A_{r_{\alpha,v}}}) \geq 1/2$, we can apply our concentration assumption again to obtain $\mu(A_{r_{e,v}}) \geq 3/4$. As usual, Theorem \ref{thm:tool} implies that:
\[
 3/4 - v \leq \mu(A_{r_{e,v}}) - \mu(A) \leq \I(v) \int_0^{r_{e,v}} \exp(H_\mu(A) t + \frac{\kappa}{2} t^2) dt ~.
\]
Denoting $S_{e,v} := H_\mu(A) + \frac{\kappa}{2} r_{e,v}$, we obtain that:
\begin{equation} \label{eq:III}
 \frac{3/4 - v}{\I(v)} \leq r_{e,v} \max(\exp( S_{e,v} r_{e,v}),1) ~.
\end{equation}
When $v \in [\lambda_0,1/2)$, clearly $r_{e,v}$ is bounded above by:
\begin{equation} \label{eq:R_0}
R_0 = R_0(\lambda_0,\alpha) := \alpha^{-1}(\log 1/\lambda_0) + \alpha^{-1}(\log 4) ~,
\end{equation}
so we deduce  from (\ref{eq:III}) that if $S_{e,v} < 0$ then:
\begin{equation} \label{eq:III-1}
\I(v) \geq \frac{1}{4R_0} ~.
\end{equation}
On the other hand, if $S_{e,v} \geq 0$, (\ref{eq:III}) implies:
\begin{equation} \label{eq:III-2}
\I(v) \geq \frac{\exp(-S_{e,v} R_0)}{4R_0}  \geq \frac{\exp(-\frac{\kappa}{2} R_0^2) }{4R_0} \exp(-H_\mu(A) R_0) ~.
\end{equation}
Recalling Proposition \ref{prop:H-is-derivative} stating that $H_\mu(A)$ is essentially the ``derivative'' $\I'(v)$, and
since $\I$ is continuous, it is then easy to realize that the validity of (\ref{eq:III-1}) and (\ref{eq:III-2}) for all $v \in [\lambda_0,1/2)$ implies:
\begin{equation} \label{eq:large}
 \tilde{\I}(v) \geq \min\brac{\I(\lambda_0) ,  \frac{1}{4R_0} , \frac{\exp(-\frac{\kappa}{2} R_0^2) }{4R_0}} \;\;\; \forall v \in [\lambda_0,1/2) ~.
\end{equation}

\subsection{Summary}

We conclude from (\ref{eq:small}) and (\ref{eq:large}) that:
\begin{equation} \label{eq:thm2-final-bound}
 \tilde{\I}(v) \geq \min\brac{c_{\delta_0} v \gamma(\log 1/v) , c_{\delta_0} \lambda_0 \gamma(\log 1/\lambda_0) , \frac{\exp(-\frac{\kappa}{2} R_0^2) }{4R_0} } \;\;\; \forall v \in [0,1/2] ~,
\end{equation}
where $\lambda_0 = \lambda_0(\delta_0,\alpha(r_0))$, $c_{\delta_0}$ and $R_0 = R_0(\lambda_0,\alpha)$ are given by (\ref{eq:lambda_0}), (\ref{eq:c_0}) and (\ref{eq:R_0}), and $\gamma(x) = x / \alpha^{-1}(x)$. Note that when $\kappa > 0$, the dependence of this bound on $\alpha$ may be simplified, by using that $\delta_0 \kappa \alpha^{-1}(\log 1/\lambda_0)^2 \leq \log 1/\lambda_0$. The dependence of the parameters asserted in the statement of Theorem \ref{thm:2} follows.
 
\section{Approximation Argument} \label{sec:aa}

In this section, we extend the proof of Theorem \ref{thm:2} (and hence \ref{thm:1}) to our general $\kappa$-semi-convexity assumptions, removing all smoothness assumptions, which were absolutely crucial for using the existence and regularity theory of isoperimetric minimizers, and also required for using the generalized Heintze--Karcher theorem. We emphasize that it is not true in general that if $\set{\mu_m}$ tends to $\mu$ in total variation on a common metric space $(\Omega,d)$, then necessarily $\I_{(\Omega,d,\mu_m)}$ tends to $\I_{(\Omega,d,\mu)}$, even pointwise (see e.g. \cite{EMilman-RoleOfConvexity,EMilmanRoleOfConvexityInFunctionalInqs}).
Fortunately, a one-sided relation always exists. Recall that $\set{\mu_m}$ is said to converge to $\mu$ in total-variation if:
\[
d_{TV}(\mu_m,\mu) := \sup_{A \subset \Omega} \abs{\mu_m(A) - \mu(A)} \rightarrow_{m \rightarrow \infty} 0 ~.
\]
The following lemma follows directly from the proof of \cite[Lemma 6.6]{EMilman-RoleOfConvexity}:

\begin{lem} \label{lem:aa1}
Assume that $\set{\mu_m}$ is a sequence of Borel probability measures on a common metric space $(\Omega,d)$, which tends to $\mu$ in total-variation. Then:
\begin{equation} \label{eq:I-one-sided}
\liminf_{u \rightarrow v} \I_{(\Omega,d,\mu)}(u) \geq \lim_{\eps \rightarrow 0} \limsup_{m \rightarrow \infty} \inf_{|u-v| < \eps} \I_{(\Omega,d,\mu_m)}(u) \;\;\; \forall v \in (0,1) ~.
\end{equation}
\end{lem}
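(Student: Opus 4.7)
The plan is to deduce the stated inequality by producing, for every $\delta > 0$, a lower bound of the form $\mu^+(A) \geq R - O(\delta)$ for each Borel set $A$ with $\mu(A)$ sufficiently close to $v$; here $R$ denotes the right-hand side and we abbreviate $\I := \I_{(\Omega,d,\mu)}$ and $\I_m := \I_{(\Omega,d,\mu_m)}$. By the very definition of $R$, for each $\delta > 0$ one may fix some $\eps > 0$ and extract a subsequence $\{m_k\}$ (where by thinning further we may additionally arrange $d_{TV}(\mu_{m_k},\mu) \to 0$ at any prescribed rate, since the full sequence converges in total variation) along which $\inf_{|u'-v|<\eps} \I_{m_k}(u') \geq R - \delta$.

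The key ingredient is an ``integrated'' isoperimetric inequality for each $\mu_{m_k}$. Using the identity $(A_t)_h = A_{t+h}$, one recognizes $\mu_{m_k}^+(A_t) = D^+ f_{m_k}(t)$ as the right lower Dini derivative of the bounded non-decreasing function $f_{m_k}(t) := \mu_{m_k}(A_t)$. Since $f_{m_k}$ is monotone it is differentiable almost everywhere, and the classical inequality $\int_0^\eta D^+ f_{m_k}(t)\,dt \leq f_{m_k}(\eta) - f_{m_k}(0)$ yields
\[
\mu_{m_k}(A_\eta) - \mu_{m_k}(A) \;\geq\; \int_0^\eta \mu_{m_k}^+(A_t)\,dt \;\geq\; \int_0^\eta \I_{m_k}(\mu_{m_k}(A_t))\,dt.
\]
We may assume $\mu^+(A) < \infty$, which forces $\mu(\partial A) = 0$ and hence $\mu(A_t) \downarrow \mu(A)$ as $t \to 0^+$. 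Combined with $|\mu_{m_k}(B) - \mu(B)| \leq d_{TV}(\mu_{m_k},\mu)$ (applied to $B = A_t$), this shows that whenever $|\mu(A) - v| < \eps/3$, for all sufficiently large $k$ and all $\eta$ smaller than some threshold $\eta_0 = \eta_0(A,\eps)$, the values $\mu_{m_k}(A_t)$ for $t \in [0,\eta]$ lie in $(v - \eps, v + \eps)$, so $\I_{m_k}(\mu_{m_k}(A_t)) \geq R - \delta$ throughout the interval of integration.

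Converting back via $\mu(A_\eta) - \mu(A) \geq \mu_{m_k}(A_\eta) - \mu_{m_k}(A) - 2 d_{TV}(\mu_{m_k},\mu)$ we obtain
\[
\frac{\mu(A_\eta) - \mu(A)}{\eta} \;\geq\; (R - \delta) - \frac{2 d_{TV}(\mu_{m_k},\mu)}{\eta}.
\]
The principal obstacle---that $\mu_m^+(A)$ is itself a $\liminf$ in $\eta$, and so cannot be transferred to $\mu^+(A)$ directly through total variation bounds---has already been circumvented by the integrated inequality; it remains only to take $\liminf_{\eta \to 0}$, handled by a diagonal selection. For each small $\eta < \eta_0$ we choose $k = k(\eta)$ from our subsequence with $d_{TV}(\mu_{m_k},\mu) < \delta\eta/2$, which is permitted by the freedom in the rate of decay of $d_{TV}(\mu_{m_k},\mu)$. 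This yields $\mu^+(A) \geq R - 2\delta$, whence $\I(u_0) \geq R - 2\delta$ for all $u_0$ with $|u_0-v|$ small enough, and letting $\delta \to 0$ completes the proof that $\liminf_{u\to v}\I(u) \geq R$.
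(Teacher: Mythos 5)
Your argument is correct and is essentially the same route as the proof the paper relies on (that of Lemma 6.6 in \cite{EMilman-RoleOfConvexity}): the integrated inequality $\mu_m(A_\eta)-\mu_m(A)\geq\int_0^\eta \I_{(\Omega,d,\mu_m)}(\mu_m(A_t))\,dt$, the total-variation comparison of $\mu_m(A_t)$ with $\mu(A_t)$, and a diagonal choice of $m=m(\eta)$ making $d_{TV}(\mu_m,\mu)/\eta$ negligible. Two cosmetic corrections: in a general metric space one only has $(A_t)_h\subseteq A_{t+h}$, hence $\mu_m^+(A_t)\leq D^+f_m(t)$ rather than equality, and $\mu^+(A)<\infty$ forces $\mu(\overline{A}\setminus A)=0$ rather than $\mu(\partial A)=0$ --- but in both cases this is precisely the direction and the consequence your proof actually uses.
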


Assume now that $(\Omega,d,\mu)$ satisfies our $\kappa$-semi-convexity assumptions. This means that $\mu$ is an absolutely continuous Borel probability measure on a complete smooth oriented Riemannian manifold $\Omega = (M,g)$, with $d$ the induced geodesic distance, and that there exists a sequence $\set{\mu_m}$ of Borel probability measures on $(\Omega,d)$, which converges in total-variation to $\mu$, so that $(\Omega,d,\mu_m)$ satisfies our smooth $\kappa$-semi-convexity assumptions for every $m$.

Under these semi-convexity assumptions, by following the proofs of \cite[Lemma 6.9]{EMilman-RoleOfConvexity} and \cite[Lemma 7.1]{EMilmanRoleOfConvexityInFunctionalInqs}, it is in fact possible to show (perhaps after passing to a subsequence) that $\I_{(\Omega,d,\mu_m)}$ are locally uniformly continuous on $(0,1)$, so that the limits in the right-hand side of (\ref{eq:I-one-sided}) may be exchanged. We will not develop this here, and instead bypass this using an argument which only depends on the estimates obtained in the proof of Theorem \ref{thm:2}.

Recall that $\K = \K_{(\Omega,d,\mu)}: \Real_+ \rightarrow [\log 2, \infty]$ denotes the exponential concentration profile of $(\Omega,d,\mu)$, so that $\K(r)$ is the best possible constant in:
\[
\mu(A) \geq 1/2 \;\;\; \Rightarrow \;\;\; 1 - \mu(A^d_r) \leq \exp(-\K(r)) ~.
\]
Clearly, $\K$ is non-decreasing, and since $\mu$ is assumed absolutely continuous, we have $\K(0) = \log(2)$. It should also be true under our semi-convexity assumptions that $\K$ is in fact increasing and continuous from the right, but we will not insist on this here (a rigorous proof seems to be rather involved). Let us therefore fix our convention for the inverse of a non-decreasing function $\alpha:\Real_+ \rightarrow \Real$:
\begin{equation} \label{eq:inverse-conv}
\alpha^{-1}(s) := \sup \set{ r \geq 0 ; \alpha(r) \leq s } ~.
\end{equation}
Before proceeding, let us summarize some useful properties of this convention:
\begin{lem} \label{lem:inverse-prop}
For a non-decreasing function $\alpha:\Real_+ \rightarrow \Real$, we have:
\begin{enumerate}
\item $\alpha \circ \alpha^{-1} \geq id$.
\item $\alpha^{-1} \circ \alpha \geq id$.
\item $\alpha(\alpha^{-1}(s) - \delta) \leq s$ for any $0<\delta<\alpha^{-1}(s)$ and $s \geq \alpha(0)$.
\item $\alpha^{-1}(\alpha(r) -\delta) \leq r$ for any $0<\delta<\alpha(r)-\alpha(0)$ and $r \geq 0$.
\item $\alpha^{-1}$ is continuous from the right.
\end{enumerate}
\end{lem}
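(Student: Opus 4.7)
My plan is to derive each of the five items directly from the definition $\alpha^{-1}(s) = \sup\set{r \geq 0 ; \alpha(r) \leq s}$, using only monotonicity of $\alpha$ and elementary properties of suprema. I would dispose of (2)--(4) first, then (5), and leave (1), the only genuinely delicate item, for last.

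Property (2) is immediate: $r$ itself lies in $\set{r' \geq 0 ; \alpha(r') \leq \alpha(r)}$, so the supremum of this set is at least $r$. For (3), the hypothesis $s \geq \alpha(0)$ guarantees that $E_s := \set{r' \geq 0 ; \alpha(r') \leq s}$ is nonempty; given $0 < \delta < \alpha^{-1}(s)$, the definition of $\sup$ supplies some $r \in E_s$ with $r > \alpha^{-1}(s) - \delta$, and then monotonicity of $\alpha$ yields $\alpha(\alpha^{-1}(s) - \delta) \leq \alpha(r) \leq s$. Property (4) is dual: any $r'$ satisfying $\alpha(r') \leq \alpha(r) - \delta$ must have $r' < r$, since otherwise monotonicity would force $\alpha(r') \geq \alpha(r) > \alpha(r) - \delta$, a contradiction; hence the supremum of such $r'$ does not exceed $r$, and the restriction $\delta < \alpha(r) - \alpha(0)$ only serves to ensure the set in question is nonempty.

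For (5), I first note that $\alpha^{-1}$ is non-decreasing, since enlarging $s$ enlarges $E_s$; hence the right-limit $L := \lim_{s \to s_0^+} \alpha^{-1}(s)$ exists and satisfies $L \geq \alpha^{-1}(s_0)$. For the reverse inequality, pick any $r_0 < L$: for all $s$ slightly exceeding $s_0$ we have $\alpha^{-1}(s) > r_0$, so some $r > r_0$ with $\alpha(r) \leq s$ exists, and monotonicity yields $\alpha(r_0) \leq \alpha(r) \leq s$. Letting $s \downarrow s_0$ gives $\alpha(r_0) \leq s_0$, hence $r_0 \leq \alpha^{-1}(s_0)$, and then $r_0 \uparrow L$ gives $L \leq \alpha^{-1}(s_0)$, completing the argument.

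The main subtlety is property (1). For every $\epsilon > 0$ the point $\alpha^{-1}(s) + \epsilon$ lies outside $E_s$, so $\alpha(\alpha^{-1}(s) + \epsilon) > s$; letting $\epsilon \downarrow 0$ yields the right-limit bound $\alpha(\alpha^{-1}(s)+) \geq s$. To promote this to the pointwise statement, I would observe that the supremum defining $\alpha^{-1}(s)$ is unchanged when $\alpha$ is replaced by its right-continuous modification $\tilde\alpha(r) := \inf_{r' > r} \alpha(r')$: indeed, for any $r_0 < \alpha^{-1}(s)$ the definition of $\sup$ produces some $r > r_0$ in $E_s$, giving $\tilde\alpha(r_0) \leq \alpha(r) \leq s$. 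One may therefore assume $\alpha$ right-continuous, after which the $\epsilon \downarrow 0$ limit directly yields $\alpha(\alpha^{-1}(s)) \geq s$. This reduction to the right-continuous case is the principal point requiring care; the remaining items are routine consequences of the definition.
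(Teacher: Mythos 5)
Your treatment of items (2)--(5) is correct: these are exactly the routine sup-arguments from the definition (\ref{eq:inverse-conv}) that the paper has in mind (it states the lemma without proof), and your right-limit observation at the start of item (1), namely that $\alpha(\alpha^{-1}(s)+\eps)>s$ for every $\eps>0$, is also correct.

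The final step of item (1), however, does not work. Your verification that $\tilde\alpha^{-1}=\alpha^{-1}$ for the right-continuous modification $\tilde\alpha(r)=\inf_{r'>r}\alpha(r')$ is fine, but what it yields is $\tilde\alpha(\alpha^{-1}(s))\geq s$; since $\tilde\alpha\geq\alpha$ pointwise, this inequality does not transfer back to the original $\alpha$, so ``one may therefore assume $\alpha$ right-continuous'' is not a legitimate reduction --- the conclusion concerns the value of the smaller function $\alpha$ at the point $\alpha^{-1}(s)$. Moreover, no argument can close this gap in the stated generality, because the pointwise inequality is simply false for a general non-decreasing $\alpha$: take $\alpha\equiv 0$ on $[0,1]$ and $\alpha\equiv 2$ on $(1,\infty)$, and $s=1$; then $E_s=[0,1]$, so $\alpha^{-1}(1)=1$, while $\alpha(\alpha^{-1}(1))=0<1$. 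Thus item (1) as written needs right-continuity of $\alpha$ at $\alpha^{-1}(s)$ (it holds trivially, for instance, in the continuous strictly increasing case to which Lemma \ref{lem:alpha-approx} reduces the main theorems), and for general non-decreasing $\alpha$ the correct statement is precisely the right-limit bound you already proved. That weaker form is also all the paper ever uses: wherever property (1) is invoked for a possibly discontinuous profile (e.g.\ in the proof of Lemma \ref{lem:basic}, or in passing from (\ref{eq:conc-reversed}) to $\mu(\overline{A_{r_{\alpha,v}}})\geq 1/2$), the inequality is applied at radii strictly larger than $\alpha^{-1}(s)$ and one then takes closures via Remark \ref{rem:closure}. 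So either record item (1) in its right-limit form ($\alpha(r)>s$ whenever $r>\alpha^{-1}(s)$), or add a right-continuity hypothesis; as literally stated it cannot be proved.
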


\begin{lem} \label{lem:alpha-approx}
The general case of Theorems \ref{thm:1} and \ref{thm:2} follows from the case that $\alpha$ is strictly increasing and continuous.
\end{lem}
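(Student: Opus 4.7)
The plan is to construct, for each $\eps > 0$, a strictly increasing continuous function $\alpha_\eps : \Real_+ \to \Real \cup \set{+\infty}$ satisfying $\alpha_\eps \leq \alpha$ pointwise and $\alpha_\eps^{-1}(s) \leq \alpha^{-1}(s) + \eps$ for every $s$, apply the already-proven case of the theorem to $\alpha_\eps$, and finally let $\eps \to 0^+$. Since $\alpha_\eps \leq \alpha \leq \K$, the concentration hypothesis is preserved, so the proof in Sections \ref{sec:thm1} and \ref{sec:thm2} delivers
\[
\tilde{\I}(v) \geq \min\brac{c_{\delta_0}\, v\, \frac{\log(1/v)}{\alpha_\eps^{-1}(\log 1/v)},\, c_{\kappa,\alpha_\eps}}
\]
(and analogously for Theorem \ref{thm:1}). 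The isoperimetric profile $\tilde{\I}(v)$ does not depend on $\eps$; combining the above with $\alpha_\eps^{-1}(\log 1/v) \leq \alpha^{-1}(\log 1/v) + \eps$ and letting $\eps \to 0^+$ produces the desired bound in $\alpha^{-1}$. The constants $c_{\alpha_\eps}$ and $c_{\kappa,\alpha_\eps}$ are given by the explicit formulas in Sections \ref{sec:thm1} and \ref{sec:thm2} as functions of $\alpha_\eps$ evaluated at a finite number of fixed parameter-points, so they converge to $c_\alpha$, $c_{\kappa,\alpha}$ as $\eps \to 0^+$.

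For the construction, one concrete option is the following. Replace $\alpha$ by its left-continuous modification $\alpha^\ell(r) := \lim_{t \to r^-} \alpha(t)$, which satisfies $\alpha^\ell \leq \alpha$ and, under convention (\ref{eq:inverse-conv}), $(\alpha^\ell)^{-1} = \alpha^{-1}$ (the defining $\sup$ is insensitive to modifications at the jumps). Mollify the shifted function $r \mapsto \alpha^\ell(r - \eps) - \eps$ against a nonnegative $C^\infty$ kernel supported in $[-\eps/2, 0]$ (which keeps the result pointwise below $\alpha^\ell(r - \eps/2) \leq \alpha(r)$ by monotonicity) and add a tiny strictly increasing perturbation like $\eps^2 r/(1+r^2)$, reabsorbed into $\eps$, to enforce strict monotonicity. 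Lemma \ref{lem:inverse-prop} then easily yields the quantitative bound $\alpha_\eps^{-1}(s) \leq \alpha^{-1}(s) + C\eps$, which is enough. An even more elementary piecewise-linear interpolant strictly below the graph of $\alpha$ through a sufficiently dense finite set of points works equally well; $+\infty$ values of $\alpha$ are handled by an increasing family of truncations $M(\eps) \nearrow \infty$.

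The main technical point is ensuring that the growth condition (\ref{eq:alpha-cond}) needed by Theorem \ref{thm:2} also holds for $\alpha_\eps$. Fix any $\delta_0' \in (1/2, \delta_0)$ in advance (using that $\delta_0 > 1/2$ by hypothesis). By construction, $\alpha_\eps(r) \geq \alpha(r - c\eps) - c\eps$ for some universal $c>0$, so $\alpha_\eps(r) \geq \delta_0 \kappa (r - c\eps)^2 - c\eps$ for $r \geq r_0 + c\eps$; this exceeds $\delta_0' \kappa r^2$ once $r$ surpasses some $r_0'(\delta_0, \delta_0', \eps, \kappa)$ which stays bounded as $\eps \to 0^+$. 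The constants asserted in Theorem \ref{thm:2} depend on $\alpha_\eps$ only via $\delta_0'$ and $\alpha_\eps(r_0')$, so the final bound (\ref{eq:thm2-final-bound}) passes to the limit without deterioration. The parallel step for Theorem \ref{thm:1} ($\kappa = 0$) requires no growth condition and is simpler.
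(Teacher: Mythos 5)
Your overall reduction is the one the paper uses: approximate $\alpha$ from below by continuous strictly increasing functions (so the hypothesis $\K \geq \alpha \geq \alpha_\eps$ is preserved), relax $\delta_0$ to a fixed $\delta_0' \in (1/2,\delta_0)$ so that the approximants still satisfy the growth condition (\ref{eq:alpha-cond}), apply the already-proven case, and pass to the limit using that the conclusion depends continuously on $\delta_0$, on $\alpha^{-1}(\log 1/v)$ and on an upper bound for $\alpha(r_0)$. The only genuine difference is the construction of the approximants: the paper works on the inverse side, noting that $\alpha^{-1}$ is non-decreasing and continuous from the right (Lemma \ref{lem:inverse-prop}), hence upper semi-continuous, so it is the infimum of continuous strictly increasing functions $\beta_k$, and one sets $\alpha_k := \beta_k^{-1}$; this makes the pointwise convergence $\alpha_k^{-1} \rightarrow \alpha^{-1}$ automatic, whereas your direct mollification forces you to verify the behaviour of the inverses by hand.

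Two points in your write-up need repair. First, the uniform estimate $\alpha_\eps^{-1}(s) \leq \alpha^{-1}(s) + C\eps$ is false whenever $\alpha$ has a flat stretch: if $\alpha \equiv s_0$ on $[a,b]$, any continuous strictly increasing minorant satisfies $\alpha_\eps < s_0$ on $[a,b)$, so taking $s = \alpha_\eps(b-\eta) < s_0$ gives $\alpha_\eps^{-1}(s) \geq b - \eta$ while $\alpha^{-1}(s) \leq a$, a gap of order $b-a$ independent of $\eps$. What your construction actually yields is $\alpha_\eps^{-1}(s) \leq \alpha^{-1}(s + c\eps) + c\eps$, and this does suffice, because $\alpha^{-1}$ is right-continuous by Lemma \ref{lem:inverse-prop} (this is exactly why convention (\ref{eq:inverse-conv}) was chosen), and only pointwise convergence at $s = \log 1/v$ and at the finitely many parameter points is needed for the limit. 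Second, the truncation $M(\eps) \nearrow \infty$ you use to handle $+\infty$ values clashes with the growth condition when $\kappa > 0$: a bounded $\alpha_\eps$ cannot satisfy $\alpha_\eps(r) \geq \delta_0' \kappa r^2$ for all large $r$, so the already-proven case of Theorem \ref{thm:2} cannot be invoked for such an approximant. The fix is easy (let $\alpha_\eps$ increase continuously to $+\infty$ just past the blow-up point of $\alpha$, or replace it by $\max\brac{\alpha_\eps(r), \delta_0' \kappa r^2}$ for $r \geq r_0$, which stays below $\alpha$ there by (\ref{eq:alpha-cond})), but as written this step fails; truncation is harmless only for Theorem \ref{thm:1}, where no growth condition is required.
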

\begin{proof}
Given a general non-decreasing $\alpha$ tending to infinity and satisfying the growth condition (\ref{eq:alpha-cond}), and any $\delta_0' \in (1/2,\delta_0)$, it is elementary to approximate $\alpha$ by continuous, strictly increasing functions $\set{\alpha_k}$, so that $\alpha \geq \alpha_k$, $\alpha_k(r) \geq \delta_0' \kappa r^2$ for any $r \geq r_0 $, and so that $\alpha_k^{-1}$ pointwise converge to $\alpha^{-1}$ (the latter would not be possible had we used a convention different from (\ref{eq:inverse-conv})). Indeed, simply consider $\alpha^{-1}$, which is continuous from the right by Lemma \ref{lem:inverse-prop} and non-decreasing, and hence upper semi-continuous, and express it as the infimum of a sequence of continuous strictly increasing functions $\set{\beta_k}$; the functions $\alpha_k := \beta_k^{-1}$ will have the desired properties. Since the lower bound on $\tilde{\I}(v)$ in the conclusion of Theorems \ref{thm:1} and \ref{thm:2} depends continuously on $\delta_0$, $\alpha^{-1}(\log 1/v)$ and an upper bound on $\alpha(r_0)$, the reduction follows by taking an appropriate limit.
\end{proof}

We will denote $\K_m = \K_{(\Omega,d,\mu_m)}$, $\I_m = \I_{(\Omega,d,\mu_m)}$ and $\I = \I_{(\Omega,d,\mu)}$ for short, and use the notation $f(r+) := \lim_{r' \rightarrow r+} f(r')$.

\begin{lem} \label{lem:basic}
Under the assumptions of Lemma \ref{lem:aa1}:
\begin{eqnarray}
\nonumber & & \forall v \in (0,1/2) \;\; \forall 0<\eps<v/2 \;\; \exists m_\eps \;\; \forall m \geq m_\eps \\
\label{eq:r_eps} & & \K_m^{-1}(\log 1/v) \leq \K^{-1}\brac{\log \frac{1}{1/2 - \eps}} + \K^{-1}\brac{\log \frac{1}{{v-2\eps}}} ~.
\end{eqnarray}
\end{lem}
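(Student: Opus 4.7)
My plan is to transport the concentration of $\mu$ to $\mu_m$ using the total-variation closeness, applying concentration twice along the way—first in the reverse direction (\ref{eq:conc-reversed}) to lift $\mu$-mass strictly above $1/2$, and then in the standard direction to push mass up to nearly $1$. The two margins in the statement, $1/2 - \eps$ and $v - 2\eps$, are there precisely to absorb the two total-variation losses incurred when passing from $\mu_m$ to $\mu$ and back.

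Concretely, I would set $r_1 := \K^{-1}(\log \frac{1}{1/2-\eps})$ and $r_2 := \K^{-1}(\log \frac{1}{v-2\eps})$, and may assume both are finite (otherwise the claim is vacuous). Choose $m_\eps$ so that $d_{TV}(\mu_m,\mu) < \eps$ for all $m \geq m_\eps$. For any $r > r_1 + r_2$, split $r = s + t$ with $s > r_1$ and $t > r_2$. By the convention (\ref{eq:inverse-conv}), $s > r_1$ forces $\K(s) > \log \frac{1}{1/2 - \eps}$, hence $\exp(-\K(s)) < 1/2 - \eps$; similarly $\exp(-\K(t)) < v - 2\eps$.

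Now take any Borel set $A$ with $\mu_m(A) \geq 1/2$. The total-variation bound gives $\mu(A) \geq 1/2 - \eps > \exp(-\K(s))$, so the reverse-concentration implication (\ref{eq:conc-reversed}) applied to $\mu$ with $\alpha = \K$ yields $\mu(A_s) > 1/2$. The standard concentration of $\mu$ applied to the set $A_s$ then gives $1 - \mu((A_s)_t) \leq \exp(-\K(t)) < v - 2\eps$. Since $(A_s)_t \subset A_{s+t} = A_r$ by the triangle inequality, $\mu(A_r) > 1 - (v - 2\eps)$, and a second application of the total-variation bound yields $\mu_m(A_r) > 1 - v + \eps$, uniformly in $A$. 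Thus $\K_m(r) \geq -\log(v - \eps) > \log 1/v$ for every $r > r_1 + r_2$, and the convention (\ref{eq:inverse-conv}) immediately gives $\K_m^{-1}(\log 1/v) \leq r_1 + r_2$.

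The main delicate point is the reverse-concentration step, since it requires $\mu(A)$ to lie strictly above $\exp(-\K(s))$ rather than merely at $1/2$; this is exactly what forces the two cushions in the statement. Everything else is essentially bookkeeping around the inverse-function convention, in particular the easy but important observation that $s > \K^{-1}(c)$ forces $\K(s) > c$ strictly under the convention (\ref{eq:inverse-conv}).
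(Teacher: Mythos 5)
Your proof is correct and takes essentially the same route as the paper: transfer a set of $\mu_m$-measure $\geq 1/2$ to $\mu$ via total variation, apply the concentration of $\mu$ once in the reverse direction to get above measure $1/2$ and once in the forward direction, and transfer back before inverting $\K_m$. The only (harmless) difference is that you argue uniformly over all admissible sets at radii strictly larger than $\K^{-1}\brac{\log \frac{1}{1/2-\eps}} + \K^{-1}\brac{\log \frac{1}{v-2\eps}}$ and then pass to the bound via the convention (\ref{eq:inverse-conv}), whereas the paper works at exactly this radius (using closures of enlargements) with a near-extremal set for $\K_m$.
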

\begin{proof}
Given $v \in (0,1)$, let $0<\eps<v/2$.
Denoting by $r_\eps$ the right hand side of (\ref{eq:r_eps}),
it follows from Lemma \ref{lem:inverse-prop} and the definition of $\K$ (applied twice) that:
\begin{equation} \label{eq:main-lem1}
\mu(B) \geq 1/2 - \eps \; \Rightarrow \; \mu\brac{\overline{B_{\K^{-1}\brac{\log \frac{1}{1/2 - \eps}}}}} \geq 1/2 \; \Rightarrow \; 1 - \mu(B_{r_{\eps}}) \leq v - 2\eps ~.
\end{equation}
Since $\set{\mu_m}$ tends to $\mu$ in total-variation, there exists $m_\eps$ so that $d_{TV}(\mu_m,\mu) < \eps$ for all $m \geq m_\eps$.
Let $A_m \subset \Omega$ denote a Borel set so that $\mu_m(A_m) \geq 1/2$ and $\exp(-\K_m(r_{\eps})) - \eps \leq 1 - \mu_m((A_m)_{r_{\eps}})$.
Since $\mu(A_m) \geq 1/2 - \eps$ for $m \geq m_\eps$, it follows from (\ref{eq:main-lem1}) that:
\[
\exp(-\K_m(r_{\eps})) - \eps \leq 1 - \mu_m((A_m)_{r_{\eps}}) < 1 - \mu((A_m)_{r_{\eps}}) + \eps \leq v - 2\eps + \eps ~.
\]
Therefore, for such $m$, $\K_m(r_{\eps}) > \log 1/v$, and Lemma \ref{lem:inverse-prop} implies that $r_{\eps} \geq \K^{-1}_m(\log 1/v)$, which is the asserted claim.
\end{proof}

\begin{lem} \label{lem:uniform-growth}
Under the assumptions of Lemma \ref{lem:aa1}, if $\K$ satisfies the growth condition:
\begin{equation} \label{eq:K-cond}
\exists \delta_0 > 1 \;\;\; \exists r_0  \;\;\; \forall r \geq r_0 \;\;\; \K(r) \geq \delta_0 \kappa r^2 ~,
\end{equation}
for some $\kappa \geq 0$, then:
\begin{eqnarray*}
& & \forall 1<\delta_1<\delta_0 \;\;\; \exists \lambda = \lambda(\K,\kappa,\delta_1) \;\;\; 0 < \lambda \leq \min(\exp(-\K(r_0)),1/2) \\
& & \forall v \in (0,\lambda] \;\; \exists m_{v,\delta_1}\;\;\; \forall m \geq m_{v,\delta_1} \;\;\; \log 1/v \geq \delta_1 \kappa \K_m^{-1}(\log 1/v)^2 ~.
\end{eqnarray*}
\end{lem}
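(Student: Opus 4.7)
The plan is to transfer the quadratic growth hypothesis from $\K$ to the approximating profiles $\K_m$, paying the small price of the weaker constant $\delta_1 < \delta_0$, by comparing $\K_m^{-1}$ against $\K^{-1}$ via Lemma \ref{lem:basic} and then invoking the explicit bound on $\K^{-1}$ coming from the growth assumption. The case $\kappa = 0$ is vacuous, since the asserted inequality reduces to $\log 1/v \geq 0$, so I assume $\kappa > 0$ throughout.

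First I would apply Lemma \ref{lem:basic} with the specific choice $\eps = v/4$, which is admissible for any $v \in (0,1/2)$, to obtain that for all $m \geq m_v$,
\[
\K_m^{-1}(\log 1/v) \leq \K^{-1}\!\brac{\log \tfrac{1}{1/2 - v/4}} + \K^{-1}(\log(2/v)).
\]
The first summand is non-decreasing in $v$, so on $(0,1/2)$ it is bounded by the constant $C := \K^{-1}(\log(8/3))$ depending only on $\K$. Next, the hypothesis $\K(r) \geq \delta_0 \kappa r^2$ for $r \geq r_0$, together with Lemma \ref{lem:inverse-prop}, yields $\K^{-1}(s) \leq \max(r_0, \sqrt{s/(\delta_0\kappa)})$; and for $v \leq \exp(-\K(r_0))$, the inequality $\K(r_0) \geq \delta_0 \kappa r_0^2$ forces $\sqrt{\log(2/v)/(\delta_0\kappa)} \geq r_0$, so the max drops the $r_0$ and gives $\K^{-1}(\log(2/v)) \leq \sqrt{\log(2/v)/(\delta_0\kappa)}$.

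Combining these bounds, for $v \leq \min(\exp(-\K(r_0)), 1/2)$ and $m \geq m_v$,
\[
\K_m^{-1}(\log 1/v) \leq C + \sqrt{(\log 1/v + \log 2)/(\delta_0\kappa)}.
\]
The desired inequality $\log 1/v \geq \delta_1 \kappa \K_m^{-1}(\log 1/v)^2$ is equivalent to $\K_m^{-1}(\log 1/v) \leq \sqrt{\log(1/v)/(\delta_1\kappa)}$, so it suffices to verify the one-variable estimate
\[
C\sqrt{\kappa} + \sqrt{(L+\log 2)/\delta_0} \leq \sqrt{L/\delta_1}, \qquad L := \log 1/v.
\]
Since $\delta_1 < \delta_0$ gives $1/\sqrt{\delta_1} > 1/\sqrt{\delta_0}$, the right-hand side outgrows the left as $L \to \infty$, so the inequality holds for all $L \geq L_*$ with $L_* = L_*(C, \kappa, \delta_0, \delta_1)$ finite. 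Setting $\lambda := \min\!\brac{\exp(-L_*), \exp(-\K(r_0)), 1/2}$ produces the required $\lambda$ depending only on $\K$, $\kappa$, and $\delta_1$.

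The argument is essentially routine once Lemma \ref{lem:basic} is in hand; the only mildly delicate point is that the first term on its right-hand side does not automatically tend to $0$ as $\eps \to 0$, so one cannot hope to make it negligible. Instead, the crude uniform bound $C = \K^{-1}(\log(8/3))$ is absorbed into the strictly faster growth $(1/\sqrt{\delta_1} - 1/\sqrt{\delta_0})\sqrt{L} \to \infty$, which is exactly the slack afforded by the strict inequality $\delta_1 < \delta_0$.
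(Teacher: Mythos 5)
Your proposal is correct and follows essentially the same route as the paper: apply Lemma \ref{lem:basic}, bound the term $\K^{-1}\brac{\log\frac{1}{1/2-\eps}}$ by a constant, use the growth condition (via the inverse convention (\ref{eq:inverse-conv})) to bound $\K^{-1}\brac{\log\frac{1}{v-2\eps}}$ by $\sqrt{\cdot/(\delta_0\kappa)}$, and absorb the bounded additive term using the strict slack $\delta_1<\delta_0$ for $v$ small. Your version merely makes the paper's argument more explicit (fixing $\eps=v/4$, the constant $C=\K^{-1}(\log(8/3))$, and the threshold $L_*$), which is perfectly fine.
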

\begin{proof}
The growth condition and Lemma \ref{lem:inverse-prop} imply that $s \geq \delta_0 \kappa \K^{-1}(s)^2$ for all $\K^{-1}(s) > r_0$. Using Lemma \ref{lem:inverse-prop} again, this means that the latter holds with $s = \log 1/(v - 2\eps)$ for any $0 < 2\eps < v < \min(\exp(-\K(r_0+))+2\eps,1/2)$. Lemma \ref{lem:basic} then implies that for any $m \geq m_\eps$:
\begin{eqnarray*}
\frac{\sqrt{\log 1/v}}{\sqrt{\kappa} \K_m^{-1} (\log 1/v)} &\geq& \frac{\sqrt{\log 1/v}}{\sqrt{\kappa} \brac{\K^{-1}\brac{\log \frac{1}{1/2-\eps}} + \K^{-1}\brac{\log \frac{1}{v-2\eps}}}} \\
&\geq&
\frac{\sqrt{\log 1/v}}{\sqrt{\kappa} \K^{-1}\brac{\log \frac{1}{1/2-\eps}} + \frac{1}{\sqrt{\delta_0}}\sqrt{\log \frac{1}{v-2\eps}}} ~.
\end{eqnarray*}
At this point we do not know that $\lim_{s \rightarrow \log 2 +} \K^{-1}(s) = 0$, but in any case the first term in the denominator above is bounded.  Therefore, for any $1<\delta_1<\delta_0$, we may choose $v > 0$ and $\eps = \eps(v) > 0$ small enough, so that the above expression is greater than $\sqrt{\delta_1}$. Setting $m_{v,\delta_1} = m_{\eps(v)}$, the asserted claim follows.
\end{proof}

This means that a uniform growth condition is satisfied by $\K_m$ for sets of small measure.
Analyzing the proof of Theorem \ref{thm:2} for small sets, we conclude the following:

\begin{cor}
Under our $\kappa$-convexity assumptions and the growth assumption (\ref{eq:K-cond}):
\begin{eqnarray*}
& & \exists \delta_1 > 1 \;\; \exists c_{\delta_1} > 0 \;\; \exists 0 < \lambda_1 < \min(\exp(-\K(r_0)),1/2) \;\;\; \exists m_1 \;\;\; \forall m \geq m_1 \\
& & \tilde{\I}_m(\lambda_1) \geq c_{\delta_1} \lambda_1 \frac{\log 1/\lambda_1}{\K_m^{-1}(\log 1/\lambda_1)} \geq c_{\delta_1} \lambda_1 \sqrt{\delta_1 \kappa} \sqrt{\log 1/\lambda_1}  ~.
\end{eqnarray*}
\end{cor}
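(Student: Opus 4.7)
The plan is to derive this corollary by combining Lemma \ref{lem:uniform-growth} with the small sets analysis carried out in Subsection \ref{subsec:small-sets}. Note that each $\mu_m$ satisfies our smooth $\kappa$-semi-convexity assumptions, so the proof of Theorem \ref{thm:2} applies to it \emph{provided} one has the required growth condition on $\K_m$. Lemma \ref{lem:uniform-growth} is precisely the device that converts the global growth assumption (\ref{eq:K-cond}) on $\K$ into an effective growth statement on $\K_m$ at finitely many specified points, uniformly in $m$ (for $m$ large).

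Concretely, I would first fix $\delta_1 \in (1,\delta_0)$ and invoke Lemma \ref{lem:uniform-growth} to produce the threshold $\lambda = \lambda(\K,\kappa,\delta_1) \in (0, \min(\exp(-\K(r_0)),1/2)]$. Next, I would pick $\lambda_1 \in (0,\lambda]$ small enough to make $c_{\lambda_1} := \log(1/(2\lambda_1)-1)/\log(1/\lambda_1)$ strictly larger than $1/(2\delta_1')$ for some $\delta_1' \in (1/2,\delta_1)$; since $c_\lambda \to 1$ as $\lambda \to 0^+$, this is automatic. Then apply Lemma \ref{lem:uniform-growth} at the two values $v=\lambda_1$ and $v=\lambda_1/2$ (both in $(0,\lambda]$), and set $m_1 := \max(m_{\lambda_1,\delta_1},m_{\lambda_1/2,\delta_1})$. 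For every $m\geq m_1$ one then has
\[
\delta_1 \kappa \K_m^{-1}(\log 1/\lambda_1)^2 \leq \log 1/\lambda_1 \;\;\; \textrm{and} \;\;\; \delta_1 \kappa \K_m^{-1}(\log 2/\lambda_1)^2 \leq \log 2/\lambda_1 ~.
\]

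With these two inequalities in hand, I would rerun the three-case argument of Subsection \ref{subsec:small-sets} for $\mu_m$ at the single value $v=\lambda_1$, with $\alpha$ there replaced by $\K_m$. The key observation is that the growth hypothesis (\ref{eq:alpha-cond}) is only invoked in Cases 2 and 3 at the specific radii $r_+ \leq \K_m^{-1}(\log 1/\lambda_1)$ and $r_{\alpha,v/2} \leq \K_m^{-1}(\log 2/\lambda_1)$, and the two displayed inequalities above give exactly what is needed at those radii with parameter $\delta_1 > 1/2$. This produces the bound $\tilde{\I}_m(\lambda_1) \geq c_{\delta_1} \lambda_1 \log(1/\lambda_1)/\K_m^{-1}(\log 1/\lambda_1)$, which is the first inequality in the conclusion. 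The second inequality is an immediate consequence of Lemma \ref{lem:uniform-growth} at $v=\lambda_1$, since it rearranges to $\log(1/\lambda_1)/\K_m^{-1}(\log 1/\lambda_1) \geq \sqrt{\delta_1 \kappa \log(1/\lambda_1)}$.

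The main subtlety I anticipate is bookkeeping: the proof of Theorem \ref{thm:2} in Subsection \ref{subsec:small-sets} produced a range $v\in(0,\lambda_0)$ with $\lambda_0$ depending on the (here $m$-dependent) quantity $\K_m(r_0)$, which is not obviously controlled uniformly in $m$. However, because the corollary only asserts a bound at the \emph{single} value $v=\lambda_1$, we bypass this difficulty by using Lemma \ref{lem:uniform-growth} directly at the two relevant points, rather than deducing a pointwise growth statement for $\K_m$ on an entire interval. This is exactly the role played by the total-variation approximation: it lets us transfer growth information from $\K$ to $\K_m$ where and when we actually need it.
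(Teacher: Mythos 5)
Your proposal is correct and follows essentially the same route as the paper: fix $\delta_1\in(1,\delta_0)$, invoke Lemma~\ref{lem:uniform-growth} to get the uniform-in-$m$ growth bound $\log 1/v\geq\delta_1\kappa\K_m^{-1}(\log 1/v)^2$ at the two specific values $v=\lambda_1,\lambda_1/2$ for $m\geq m_1$, then rerun cases 2 and 3 of Subsection~\ref{subsec:small-sets} for $\mu_m$ at $v=\lambda_1$ (the general-$\alpha$ version via Lemma~\ref{lem:inverse-prop}), and deduce the second displayed inequality directly from the same growth bound at $v=\lambda_1$. Your detour through an auxiliary $\delta_1'\in(1/2,\delta_1)$ is unnecessary -- one simply chooses $\lambda_1\leq\exp(-\frac{\log 16}{1-1/(2\delta_1)})$ exactly as in (\ref{eq:lambda_0}) so that the constant (\ref{eq:c_0}) applies with $\delta_1$ in place of $\delta_0$ -- but this does not affect the correctness of the argument, and your closing remark correctly identifies why the single-point application of Lemma~\ref{lem:uniform-growth} sidesteps the $m$-dependence of $\exp(-\K_m(r_0))$.
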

\begin{proof}
Let $1 < \delta_1 < \delta_0$, and set $c_{\delta_1}>0$ according to (\ref{eq:c_0}). According to the proof of Theorem \ref{thm:2} (cases 2 and 3 of Subsection \ref{subsec:small-sets}), which remains valid for a general non-decreasing $\alpha$ by Lemma \ref{lem:inverse-prop}, in order to obtain the claimed lower bound on $\tilde{\I}_m(\lambda_1)$, we need the growth condition $\log 1/v \geq \delta_1 \kappa \K_m^{-1}(\log 1/v)^2$ to hold for $v = \lambda_1, \lambda_1/2$. Lemma \ref{lem:uniform-growth} ensures that this happens for all $m \geq m_1$ with $m_1 = \max(m_{\lambda_1,\delta_1},m_{\lambda_1/2,\delta_1})$. The second inequality above follows from the same uniform upper bound on $\K_m^{-1}(\log 1/\lambda_1)$.
\end{proof}

The proof of Theorem \ref{thm:2} for large sets (Subsection \ref{subsec:large-sets}), where the estimates only depend on $\K_m^{-1}(\log 1/\lambda_1)$ and $\K_m^{-1}(\log 4)$, which are uniformly bounded from above by Lemma \ref{lem:basic}, then gives:

\begin{prop} \label{prop:I-is-positive}
Under our $\kappa$-convexity assumptions and the growth assumption (\ref{eq:K-cond}):
\begin{eqnarray*}
& & \exists c_{\K,\kappa} > 0 \; \exists 0<\lambda_1 < \min(\exp(-\K(r_0)),1/2) \; \exists m_1 \\
& & \forall m \geq m_1 \;\; \forall v \in [\lambda_1,1/2] \;\;\; \tilde{\I}_m(v) \geq c_{\K,\kappa} ~.
\end{eqnarray*}
Consequently, Lemma \ref{lem:aa1} implies:
\[
\forall v \in (\lambda_1,1/2] \;\;\; \tilde{\I}(v) \geq c_{\K,\kappa} > 0 ~.
\]
\end{prop}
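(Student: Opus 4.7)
The plan is to run the Large Sets analysis of Subsection \ref{subsec:large-sets} for each approximating measure $\mu_m$, uniformize the constants in $m$ using Lemma \ref{lem:basic}, and then transfer the bound to $\mu$ via Lemma \ref{lem:aa1} combined with the continuity and symmetry of $\I$. Take $\lambda_1$, $m_1$ and the lower bound $\tilde{\I}_m(\lambda_1) \geq c_1 > 0$ furnished by the previous corollary. For $v \in [\lambda_1,1/2)$ and each $m \geq m_1$, the Large Sets reasoning applied to $\mu_m$ (an isoperimetric minimizer $A$ of measure $v$, the radius $r_{e,v,m} := \K_m^{-1}(\log 1/v) + \K_m^{-1}(\log 4)$ which forces $\mu_m(A_{r_{e,v,m}}) \geq 3/4$, followed by Theorem \ref{thm:tool} and Proposition \ref{prop:H-is-derivative}) reproduces the estimate (\ref{eq:large}):
\[
\tilde{\I}_m(v) \geq \min\brac{\tilde{\I}_m(\lambda_1),\; \frac{1}{4 R_{0,m}},\; \frac{\exp(-\kappa R_{0,m}^2/2)}{4 R_{0,m}}},
\]
with $R_{0,m} := \K_m^{-1}(\log 1/\lambda_1) + \K_m^{-1}(\log 4)$ serving as the worst case of $r_{e,v,m}$ on this range of $v$.

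The pivotal step is to bound $R_{0,m}$ by a constant depending only on $\K$ and $\lambda_1$. Choosing $\eps = \min(\lambda_1,1/4)/4$, Lemma \ref{lem:basic} shows that for all sufficiently large $m$, each of $\K_m^{-1}(\log 1/\lambda_1)$ and $\K_m^{-1}(\log 4)$ is dominated by a sum of values of $\K^{-1}$ at arguments that do not depend on $m$. This produces a uniform bound $R_{0,m} \leq R_0(\K,\lambda_1) < \infty$, and combining with $\tilde{\I}_m(\lambda_1) \geq c_1$ yields a uniform lower bound $\tilde{\I}_m(v) \geq c_{\K,\kappa} > 0$ for all large $m$ and all $v \in [\lambda_1,1/2]$, where $c_{\K,\kappa}$ is the minimum of the three quantities above with $R_{0,m}$ replaced by $R_0$.

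For the \emph{consequently} statement, fix any $v \in (\lambda_1,1/2]$ and pick $\eps > 0$ small enough that $(v-\eps,v+\eps) \subset (\lambda_1,1-\lambda_1)$. Since each smooth $\mu_m$ satisfies $\I_m(u) = \I_m(1-u)$ (Further Remarks, Section \ref{sec:prelim}), the uniform lower bound on $\tilde{\I}_m$ over $[\lambda_1,1/2]$ translates to $\I_m(u) \geq c_{\K,\kappa}$ throughout this neighborhood for every large $m$. Lemma \ref{lem:aa1} then gives $\liminf_{u \to v} \I(u) \geq c_{\K,\kappa}$, and the continuity of $\I$ under the $\kappa$-semi-convexity assumptions (again by the Further Remarks) upgrades this to $\I(v) \geq c_{\K,\kappa}$, whence $\tilde{\I}(v) \geq c_{\K,\kappa}$ by symmetry of $\I$ about $1/2$. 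The only genuine obstacle is the uniformization of $R_{0,m}$; once this is aligned with the small-sets corollary, the remainder is a direct transcription of Subsection \ref{subsec:large-sets} followed by a single invocation of Lemma \ref{lem:aa1}.
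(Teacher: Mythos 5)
Your proposal is correct and coincides with the paper's own (terse) argument: run the large-sets estimate of Subsection \ref{subsec:large-sets} for each smooth $\mu_m$, observe that it depends only on $\tilde{\I}_m(\lambda_1)$, $\K_m^{-1}(\log 1/\lambda_1)$ and $\K_m^{-1}(\log 4)$, uniformize the latter two via Lemma \ref{lem:basic}, and pass to $\mu$ with Lemma \ref{lem:aa1} together with the continuity and symmetry of $\I$. No gaps; your write-up simply makes explicit the steps the paper leaves implicit.
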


Recalling that an isoperimetric inequality always implies a concentration inequality, as given by (\ref{eq:i-c}), we easily obtain:

\begin{cor} \label{cor:cont-at-log2}
Under our $\kappa$-semi-convexity assumptions and the growth assumption (\ref{eq:K-cond}), $\K^{-1}(s)$ is continuous at $s=\log 2$, so $\lim_{s \rightarrow \log 2+} \K^{-1}(s) = 0$.
\end{cor}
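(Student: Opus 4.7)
The plan is to upgrade the isoperimetric lower bound supplied by Proposition \ref{prop:I-is-positive} to a linear-in-$r$ concentration lower bound near $r = 0$, from which the continuity of $\K^{-1}$ at $\log 2$ is immediate. Proposition \ref{prop:I-is-positive} furnishes $\lambda_1 \in (0, 1/2)$ and a constant $c := c_{\K,\kappa} > 0$ with $\tilde{\I}(v) \geq c$ for every $v \in (\lambda_1, 1/2]$. Since $v \leq 1/2$ on this range, this may be rewritten as $\tilde{\I}(v) \geq v \cdot (2c)$, placing us in the framework of (\ref{eq:i-c}) with $\gamma \equiv 2c$ on the initial segment $[\log 2, \log 1/\lambda_1]$.

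Next I would run the standard integration scheme underlying (\ref{eq:i-c}). For any Borel set $A \subset \Omega$ with $\mu(A) \geq 1/2$, set $g(r) := 1 - \mu(A_r)$; this is a non-increasing function with $g(0) \leq 1/2$. Using the symmetry of $\I$ about $1/2$ recorded in the ``Further Remarks'' subsection, one has $\mu^+(A_r) \geq \tilde{\I}(g(r))$ whenever $g(r) \leq 1/2$, so on the interval where $g(r) > \lambda_1$ the Minkowski-content definition produces the Dini-type lower bound $\liminf_{\eps \to 0+} (g(r) - g(r+\eps))/\eps \geq c$. The standard argument behind (\ref{eq:i-c}) (see \cite{EMilmanSodinIsoperimetryForULC}) integrates this to $g(r) \leq 1/2 - cr$ as long as the right-hand side exceeds $\lambda_1$, i.e.\ on $[0, r_1)$ with $r_1 := (1/2 - \lambda_1)/c$. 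Taking the supremum over admissible $A$ yields $\K(r) \geq -\log(1/2 - cr) \geq \log 2 + 2cr$ for $r \in [0, r_1)$, where the second inequality uses $-\log(1-x) \geq x$ for $x \in [0,1)$.

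To conclude, apply the convention (\ref{eq:inverse-conv}) to the inequality $\K(r) \geq \log 2 + 2cr$ on $[0, r_1)$: this gives $\K^{-1}(\log 2 + \eps) \leq \eps/(2c)$ for all $\eps \in [0, 2cr_1)$, and letting $\eps \to 0^+$ establishes $\lim_{s \to \log 2+} \K^{-1}(s) = 0$. The only mildly subtle point is the passage from the pointwise Dini-derivative lower bound to the integrated estimate $g(r) \leq 1/2 - cr$; however, since $g$ is monotone (hence of bounded variation) this is entirely classical and is already embedded in the proof of (\ref{eq:i-c}) being invoked, requiring no additional regularity of $\mu$.
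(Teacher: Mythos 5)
Your argument is correct and is essentially the paper's own route: the paper likewise obtains the corollary by feeding the uniform bound $\tilde{\I}(v)\geq c_{\K,\kappa}$ on $(\lambda_1,1/2]$ from Proposition \ref{prop:I-is-positive} into the standard isoperimetry-implies-concentration integration (\ref{eq:i-c}), which forces $\K(r)-\log 2$ to grow at least linearly near $r=0$ and hence $\K^{-1}(s)\to 0$ as $s\to\log 2+$. You have merely written out explicitly the Dini-derivative integration and the restriction to the range where $1-\mu(A_r)>\lambda_1$, which the paper leaves implicit.
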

\begin{rem}
Had we a-priori known that $\K(r) > \log 2$ for all $r>0$, this conclusion would immediately follow by showing that $\K(r)$ is continuous at $r=0$, which is elementary. Our more complicated argument verifies this in a strong sense, by showing that the derivative of $\K(r)$ at $r=0$ must be strictly positive.
\end{rem}

We can now claim:

\begin{prop}
Theorem \ref{thm:2} holds without any smoothness assumptions.
\end{prop}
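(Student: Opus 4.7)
The plan is to transfer the estimates from the smooth case of Theorem \ref{thm:2} (applied to the approximating sequence $\set{\mu_m}$) to the limiting measure $\mu$, using Lemma \ref{lem:aa1} to pass from lower bounds on $\I_m$ to lower bounds on $\I$, together with Lemma \ref{lem:basic} and Corollary \ref{cor:cont-at-log2} to control $\K_m^{-1}$ uniformly in $m$. By Lemma \ref{lem:alpha-approx} we may assume throughout that $\alpha$ is strictly increasing and continuous, so that $\alpha^{-1}$ is the classical inverse; and we recall that under our $\kappa$-semi-convexity assumptions, $\I$ is continuous on $(0,1)$ by the Further Remarks of Section \ref{sec:prelim}. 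For $v \in [\lambda_1,1/2]$ with $\lambda_1$ as in Proposition \ref{prop:I-is-positive}, the bound $\tilde{\I}(v) \geq c_{\K,\kappa}$ is already provided there, and is absorbed into the constant $c_{\kappa,\alpha}$ in the conclusion.

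It remains to treat $v \in (0,\lambda_1)$. Fix such $v$, fix $\delta_0' \in (1/2, \delta_0)$, and pick $\eta \in (0, v/4)$; consider any $u \in (v - \eta, v + \eta)$. For any sufficiently small $\eps > 0$ (independent of $u$) and all $m \geq m_\eps$, Lemma \ref{lem:basic} applied at $u$ and at $u/2$ gives the uniform bound
\[
\K_m^{-1}(\log 1/u) \leq \K^{-1}\!\brac{\log \tfrac{1}{1/2 - \eps}} + \K^{-1}\!\brac{\log \tfrac{1}{u - 2\eps}} ~,
\]
and analogously with $u$ replaced by $u/2$. Since $\K \geq \alpha$ implies $\K^{-1} \leq \alpha^{-1}$, the growth hypothesis (\ref{eq:alpha-cond}) combined with these bounds yields, as in the proof of Lemma \ref{lem:uniform-growth} and upon possibly shrinking $\eps$ and $\lambda_1$, the growth condition $\log 1/u \geq \delta_0' \kappa \K_m^{-1}(\log 1/u)^2$ (and likewise for $u/2$), uniformly for $u \in (v-\eta, v+\eta)$ and $m \geq m_\eps$; the strict inequality $\delta_0' < \delta_0$ provides the necessary slack to absorb the additive term $\K^{-1}\!\brac{\log \tfrac{1}{1/2-\eps}}$. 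The small-sets analysis of Subsection \ref{subsec:small-sets}, applied to the smooth triple $(\Omega, d, \mu_m)$ with concentration profile $\alpha_m := \K_m$, then delivers
\[
\tilde{\I}_m(u) \;\geq\; c_{\delta_0'} \, u \, \frac{\log 1/u}{\K_m^{-1}(\log 1/u)} \;\geq\; c_{\delta_0'} \, u \, \frac{\log 1/u}{\K^{-1}\!\brac{\log \tfrac{1}{1/2 - \eps}} + \K^{-1}\!\brac{\log \tfrac{1}{u - 2\eps}}}
\]
for every $u \in (v-\eta, v+\eta)$ and every $m \geq m_\eps$, the right-hand side being continuous in $u$ and independent of $m$.

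To finish, take the infimum in $u$ over the neighborhood, then $\limsup_{m \to \infty}$, and apply Lemma \ref{lem:aa1} to obtain a lower bound on $\liminf_{u \to v} \tilde{\I}(u)$. Sending $\eta \to 0^+$ and then $\eps \to 0^+$, and invoking Corollary \ref{cor:cont-at-log2} (which yields $\K^{-1}(\log \tfrac{1}{1/2-\eps}) \to 0$) together with the right-continuity of $\K^{-1}$ from Lemma \ref{lem:inverse-prop}, the continuity of $\I$ converts the liminf into $\tilde{\I}(v)$, and we conclude
\[
\tilde{\I}(v) \;\geq\; c_{\delta_0'}\, v\, \frac{\log 1/v}{\K^{-1}(\log 1/v)} \;\geq\; c_{\delta_0'}\, v\, \gamma(\log 1/v) ~,
\]
where the last inequality uses $\K^{-1} \leq \alpha^{-1}$. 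The main obstacle is upgrading the pointwise growth control of Lemma \ref{lem:uniform-growth} to hold uniformly in $u$ near $v$ and uniformly in $m$; this is precisely where we exploit both the margin $\delta_0' < \delta_0$ and the vanishing of $\K^{-1}$ at $\log 2$ provided by Corollary \ref{cor:cont-at-log2}, which ensures that the spurious additive term in Lemma \ref{lem:basic} disappears in the limit.
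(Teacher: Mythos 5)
Your proposal is correct and follows essentially the same route as the paper: both rely on Lemma \ref{lem:aa1} to pass one-sided lower bounds through the total-variation limit, on Lemma \ref{lem:basic} to control $\K_m^{-1}$ by $\K^{-1}$ up to the additive term $\K^{-1}\brac{\log\frac{1}{1/2-\eps}}$, on Lemma \ref{lem:uniform-growth} to propagate the growth condition with a slightly smaller $\delta_0'<\delta_0$, on Corollary \ref{cor:cont-at-log2} to kill that additive term in the limit, and on Proposition \ref{prop:I-is-positive} for the large-sets range. The only cosmetic difference is that the paper packages the small- and large-set estimates into a single piecewise bound $J_{m,\lambda}$ and passes the whole thing through the $\eps,m$ limit at once, whereas you handle the two regimes separately and let $\delta_0'\to\delta_0$ at the end; both yield the bound of (\ref{eq:thm2-final-bound}).
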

\begin{proof}
An inspection of the proof of Theorem \ref{thm:2}, valid under our additional smoothness assumptions, gives for the space $(\Omega,d,\mu_m)$:
\begin{equation} \label{eq:known}
 \tilde{\I}_m(v) \geq J_{m,\lambda}(v) \;\;\; \forall v \in [0,1/2] \;\;\; \forall \lambda \in (0,1/4] ~,
\end{equation}
where:
\[
J_{m,\lambda}(v) := \begin{cases} c_{\delta_m(v)} \frac{v \log 1/v}{K_m^{-1}(\log 1/v)} & v \in [0,\lambda] \\
 \min\brac{c_{\delta_m(\lambda)} \frac{\lambda \log 1/\lambda}{K_m^{-1}(\log 1/\lambda)} , \frac{\exp(-\frac{\kappa}{2} R_{m,\lambda}^2) }{4R_{m,\lambda}} } & v \in [\lambda,1/2] \end{cases} ~,
\]
\[
\delta_m(u) := \min\brac{ \frac{\log 1/u}{\kappa \K_m^{-1}(\log 1/u)^2} , \frac{\log 2/u}{\kappa \K_m^{-1}(\log 2/u)^2}} ~,
\]
\[
c_{\delta_m(u)} := \frac{e}{16e+2} \brac{1 - \frac{1}{2\delta_m(u)}} ~,
\]
and:
\[
R_{m,\lambda} := \K_m^{-1}(\log 1/\lambda) + \K_m^{-1}(\log 4) ~.
\]

Note that (\ref{eq:known}) is only meaningful when $c_{\delta_m(v)} > 0$ for all $v \in (0,\lambda]$.
Since we assume that $\K \geq \alpha$, we will set $\K_0 := \alpha$ in the notations above, and reserve the subscript $m=0$ to refer to this case.
By Lemma \ref{lem:inverse-prop}, $\alpha^{-1}$ is always continuous from the right, and since $\K^{-1}$ is continuous at $\log 2$ by Corollary \ref{cor:cont-at-log2}, it follows by Lemma \ref{lem:basic} that:
\[\lim_{\eps \rightarrow 0} \liminf_{m \rightarrow \infty} \inf_{|u-v|<\eps} \frac{\delta_m(u)}{\delta_0(v)} \geq 1 ~,~  \lim_{\eps \rightarrow 0} \liminf_{m \rightarrow \infty} \inf_{|u-v|<\eps} \frac{\K_0^{-1}(v)}{\K_m^{-1}(u)} \geq 1 ~.
\]Consequently:
\[\lim_{\eps \rightarrow 0} \liminf_{m \rightarrow \infty} \inf_{|u-v|<\eps} \frac{R_{0,\lambda}}{R_{m,\lambda}} \geq 1 ~,
~ \lim_{\eps \rightarrow 0} \liminf_{m \rightarrow \infty} \inf_{|u-v|<\eps} \frac{c_{\delta_m(u)}}{c_{\delta_0(v)}} \geq 1 ~.
\] Putting everything together, this implies that:
\[
\lim_{\eps \rightarrow 0} \liminf_{m \rightarrow \infty} \inf_{|u-v|<\eps} J_{m,\lambda}(u) \geq J_{0,\lambda}(v) ~.
\]
Lemma \ref{eq:I-one-sided} therefore implies:
\begin{eqnarray*}
\I(v) & \geq & \lim_{\eps \rightarrow 0} \limsup_{m \rightarrow \infty} \inf_{|u-v|<\eps} \I_m(u) \\
& \geq & \lim_{\eps \rightarrow 0} \liminf_{m \rightarrow \infty} \inf_{|u-v|<\eps} J_{m,\lambda}(u) \geq
J_{0,\lambda}(v) ~.
\end{eqnarray*}

It remains to choose $\lambda =\min(\exp(-\alpha(r_0)),\exp(-\frac{\log 16}{1 - 1/(2\delta_0)}),1/4)$
as in the proof of Theorem \ref{thm:2}, which ensures that $\delta_0(v) \geq \delta_0 > 1/2$ for all $v \in (0,\lambda]$ by our growth condition (\ref{eq:alpha-cond}), and hence $c_{\delta_0(v)} > 0$ in that range. We thereby recover the result (\ref{eq:thm2-final-bound}) of Theorem \ref{thm:2}, without any smoothness assumptions. The proof is complete.
\end{proof}

\section{Applications} \label{sec:applications}

In this section, we provide a couple of applications of Theorems \ref{thm:1} and \ref{thm:2}, which complete the picture we have chosen to depict in this work. We defer the description of many other applications to \cite{EMilmanGeometricApproachPartII,EMilmanGeometricApproachPartIII}.

\subsection{Any Concentration Implies Linear Isoperimetry}

As an application of Theorem \ref{thm:1} and Proposition \ref{prop:weak-concave}, we can immediately deduce the Main Theorem of our previous work \cite{EMilman-RoleOfConvexity}. Our original approach involved many ingredients, such as semi-group gradient estimates following Bakry--Ledoux \cite{BakryLedoux} and Ledoux \cite{LedouxSpectralGapAndGeometry}, a Paley--Zygmund type argument, and Theorem \ref{thm:concave} on the concavity of the isoperimetric profile $\I$, which is based on an analysis of the second variation of area. The following argument avoids all of these ingredients, simplifying the machinery underlying the proof. Moreover, we obtain the following slightly stronger version:

\begin{thm} \label{thm:main-better}
Assume that:
\[
\exists \lambda_0 \in (0,1/2) \;\; \exists r_0 > 0 \;\; \text{such that} \;\; \mu(A) \geq 1/2 \Rightarrow 1-\mu(A^d_{r_0}) \leq \lambda_0 ~.
\]
Then under our convexity assumptions on $(\Omega,d,\mu)$:
\[
 \frac{\tilde{\I}(v)}{v} \geq \frac{k_{\lambda_0}}{r_0} \;\;\; \forall v \in [0,1/2] ~,
\]
for some constant $k_{\lambda_0}> 0$ depending solely on $\lambda_0$.
\end{thm}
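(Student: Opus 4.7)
\textbf{Proof proposal for Theorem \ref{thm:main-better}.}
The plan is to reduce the problem to bounding $\I(1/2)$ from below, using Corollary~\ref{cor:weak-concave} which asserts (under our convexity assumptions) that $\inf_{v \in [0,1/2]} \tilde{\I}(v)/v = 2\I(1/2)$. Thus it suffices to prove $\I(1/2) \geq k_{\lambda_0}/(2r_0)$ for some $k_{\lambda_0}>0$ depending only on $\lambda_0$, and then the desired linear isoperimetric inequality follows immediately for every $v\in[0,1/2]$.

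To estimate $\I(1/2)$ from below, I would work first under the smoothness assumptions and pick an isoperimetric minimizer $A$ of measure $1/2$, so that $\mu^+(A) = \I(1/2)$. The hypothesis directly yields $\mu(A_{r_0}) \geq 1 - \lambda_0$. Applying Theorem~\ref{thm:tool} with $\kappa = 0$ to $A$ at distance $r_0$ gives
\[
\tfrac{1}{2} - \lambda_0 \;\leq\; \mu(A_{r_0}) - \mu(A) \;\leq\; \I(1/2)\int_0^{r_0} \exp(H_\mu(A)\,t)\,dt .
\]
The key point now is that the bound $H_\mu(A) \leq \I(v)/v$ derived in the proof of Proposition~\ref{prop:weak-concave} (by applying Morgan's theorem on the complementary side and letting $r\to\infty$) applies at $v=1/2$, giving $H_\mu(A) \leq 2\I(1/2)$. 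Setting $f := r_0\,\I(1/2)$, the above inequality becomes
\[
\tfrac{1}{2} - \lambda_0 \;\leq\; f\,\exp(2f) .
\]
This transcendental inequality for $f$ forces $f \geq k_{\lambda_0}$ for some positive constant depending only on $\lambda_0$; for instance, if $f \leq 1$ one obtains $f \geq (1/2 - \lambda_0)/e^{2}$, and if $f > 1$ the bound is trivial. Combining with Corollary~\ref{cor:weak-concave} yields $\tilde{\I}(v)/v \geq 2\I(1/2) \geq 2k_{\lambda_0}/r_0$ for all $v \in [0,1/2]$, which is the asserted inequality under the additional smoothness assumptions.

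The remaining step is the removal of the smoothness assumptions, which should be the main technical point. I would follow the approximation argument of Section~\ref{sec:aa}: approximate $\mu$ in total variation by measures $\{\mu_m\}$ satisfying the smooth convexity assumptions. A routine perturbation shows that the one-point concentration hypothesis passes to the approximants (with $\lambda_0$ replaced by $\lambda_0 + o(1)$ and $r_0$ unchanged), so the conclusion $\tilde{\I}_{\mu_m}(v)/v \geq 2 k_{\lambda_0 + o(1)}/r_0$ holds for every $m$. Invoking Lemma~\ref{lem:aa1} to pass to the limit gives the corresponding lower bound on $\tilde{\I}_\mu(v)$, completing the proof. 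The main obstacle is really just packaging this last step cleanly — the substantive geometric content is entirely contained in the single application of Morgan's theorem at measure $1/2$, together with the weak concavity bound $H_\mu(A) \leq \I(v)/v$.
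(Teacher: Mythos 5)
Your smooth-case argument is correct, and it actually takes a more direct route than the paper. The paper makes the same reduction to $\I(1/2)$ via Corollary \ref{cor:weak-concave}, but it never touches a minimizer of measure $1/2$: it observes that the hypothesis says precisely $\K(r_0) \geq \log 1/\lambda_0$, hence $\K^{-1}(\log 1/(\lambda_0+\eps)) \leq r_0$, and then quotes the bound (\ref{eq:thm1-conc-weak}) already established in the proof of Theorem \ref{thm:1} at the level $\lambda_\eps = \lambda_0 + \eps$ (where the geometric argument is run on minimizers of \emph{small} measure, combined with the monotonicity of $\I(v)/v$). You instead apply Morgan's Theorem \ref{thm:tool} once, directly at $v = 1/2$, using the hypothesis to get $\mu(A_{r_0}) - \mu(A) \geq 1/2 - \lambda_0$ and the first-variation bound $H_\mu(A) \leq \I(v)/v$ from (\ref{eq:weak-concave}) to arrive at $1/2 - \lambda_0 \leq f e^{2f}$ with $f = r_0 \I(1/2)$; this is essentially the ``large sets'' device of Subsection \ref{subsec:large-sets} specialized to $\kappa = 0$. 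It is a legitimate self-contained alternative, yields an explicit constant (e.g. $k_{\lambda_0} = 2(1/2-\lambda_0)/e^2$), and uses only ingredients the paper permits itself (no second variation). What the paper's route buys is that, by phrasing everything through Theorem \ref{thm:1} and the profile $\K$, the non-smooth case is covered verbatim by the machinery of Section \ref{sec:aa}.

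The one place you are too quick is the approximation step. It is not a ``routine perturbation'' that the hypothesis passes to the approximants with $r_0$ unchanged: a set $A$ with $\mu_m(A) \geq 1/2$ may have $\mu(A) = 1/2 - \eps_m < 1/2$, and your hypothesis is a single statement about sets of $\mu$-measure at least $1/2$, so it says nothing about such $A$. This is exactly the difficulty that Lemma \ref{lem:basic} is built to handle, at the price of an extra enlargement. A cheap fix adapted to your argument: the hypothesis is equivalent (up to closures) to ``$\mu(B) > \lambda_0 \Rightarrow \mu(B_{r_0}) > 1/2$'', and applying the hypothesis once more to $B_{r_0}$ gives $\mu(B) > \lambda_0 \Rightarrow \mu(B_{2r_0}) \geq 1 - \lambda_0$. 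Hence for $m$ large enough that $\eps_m < 1/2 - \lambda_0$, the measure $\mu_m$ satisfies your hypothesis with parameters $(2r_0, \lambda_0 + \eps_m)$, and your smooth-case bound applies with $r_0$ replaced by $2r_0$, which merely halves the constant; one then passes to the limit with Lemma \ref{lem:aa1}, using the continuity and symmetry of $\I$ noted at the end of Section \ref{sec:prelim}. With that correction (or by simply invoking Lemma \ref{lem:basic} and the arguments of Section \ref{sec:aa}, as the paper does), your proof is complete.
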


This result for a small enough (non explicitly calculated) value of $\lambda_0>0$ follows from \cite[Theorem 1.5 and Corollary 4.3]{EMilman-RoleOfConvexity}. We now see that \emph{any} non-trivial uniform concentration implies a linear isoperimetric inequality under our convexity assumptions.

\begin{proof}
We will provide a proof using our additional smoothness assumptions; the general case follows by the arguments of Section \ref{sec:aa}.
Note that our concentration assumption is precisely that $\K(r_0) \geq \log 1/\lambda_0$, implying by Lemma \ref{lem:inverse-prop} that $\K^{-1}(\log 1 / (\lambda_0 +\eps)) \leq r_0$, for any small $\eps > 0$. According to Corollary \ref{cor:weak-concave}, $\inf_{v \in [0,1/2]} \tilde{\I}(v) / v = 2 \I(1/2)$, so it is enough to obtain a lower bound on $\I(1/2)$. But this readily follows from Theorem \ref{thm:1}, since according to (\ref{eq:thm1-conc-strong}) or (\ref{eq:thm1-conc-weak}), setting $\lambda_\eps = \lambda_0 + \eps$, we have for any $\eps \in (0,1/2 - \lambda_0)$:
\[
 \frac{\tilde{\I}(v)}{v} \geq 2 \I(1/2) \geq 2 c_{\lambda_\eps} \frac{\lambda_\eps}{1-\lambda_\eps} \frac{\lambda_\eps \log 1/\lambda_\eps}{\K^{-1}(\log 1 / \lambda_\eps)} \geq 2 c_{\lambda_\eps} \frac{\lambda_\eps}{1-\lambda_\eps} \frac{\lambda_\eps \log 1/\lambda_\eps}{r_0} ~.
\]
Taking the limit as $\eps$ tends to $0$, we also obtain the formula for $k_{\lambda_0}$.
\end{proof}

\subsection{Generalized Bobkov's Isoperimetric Inequality}

Let us see how Theorems \ref{thm:1} and \ref{thm:2} easily imply a generalized version of Bobkov's isoperimetric inequality (Corollary \ref{cor:gen-Bobkov}).

\begin{proof}[Proof of Corollary \ref{cor:gen-Bobkov}]
The crucial yet elementary observation is that if $\beta$ satisfies (\ref{eq:beta}) then:
\begin{equation} \label{eq:Bobkov}
\K(r + \beta^{-1}(\log 2)) \geq \beta(r) \;\;\; \forall r \geq 0 ~.
\end{equation}
Indeed, set $R := \beta^{-1}(\log 2)$ and denote $B_r := \set{x \in \Omega ; d(x,x_0) < r}$, so that $\mu(B_R) \geq 1 - \exp(-\beta(R)) \geq 1/2$. Given any Borel subset $A \subset \Omega$ with $\mu(A) \geq 1/2$, we must either have $A \cap B_R \neq \emptyset$ or $\partial A \cap \partial B_R \neq \emptyset$ (the latter possibility follows since otherwise we would conclude by compactness that $\mu(B_{R+\eps}) = 1/2$ for some $\eps>0$, in contradiction to the definition of $R$ and our convention (\ref{eq:inverse-conv})). Therefore $x_0 \in \overline{A^d_R}$, hence $B_r \subset A^d_{r+R}$, and (\ref{eq:Bobkov}) immediately follows. Defining:
\[
\alpha(r) = \begin{cases} 0 & r \in [0,R] \\ \beta(r - R) & r \in [R,\infty) \end{cases} ~,
\]
we see that $\K \geq \alpha$ and that $\alpha$ satisfies a growth condition similar to that of $\beta$:
\[
\exists \delta_0' > 1/2 \;\;\; \exists r_0' \geq 0 \;\;\; \forall r \geq r_0' \;\;\; \alpha(r) \geq \delta_0' \kappa r^2 ~,
\]
with say $\delta_0' = \frac{1}{4} + \frac{\delta_0}{2}$ and $r_0' = \max(r_0,b_{\delta_0} R) + R$, where $b_{\delta_0}$ depends solely on $\delta_0$.
Applying Theorems \ref{thm:1} or \ref{thm:2}, and noting that the definition of $\alpha$ implies:
\[
\alpha^{-1}(\log 1/v) = \beta^{-1}(\log 1/v) + \beta^{-1}(\log 2) \leq 2\beta^{-1}(\log 1/v) \;\;\; \forall v \in (0,1/2] ~,
\]
the conclusion of the corollary immediately follows.
\end{proof}

\section{Concluding Remarks} \label{sec:conclude}

\subsection{Global Vs. Local Convexity}

It is not difficult to verify that all of our results remain valid when our notion of smooth $\kappa$-semi-convexity is replaced by a slightly more general one: the measure $\mu$ is allowed to be supported on a \emph{geodesically convex} set $\Omega \subset (M,g)$ with $C^2$ boundary, so that if $d\mu = \exp(-\psi) dvol_M|_{\Omega}$, then $\psi \in C^2(\overline{\Omega})$ and $Ric_g + Hess_g \psi \geq -\kappa g$ on $\Omega$. This follows from the known results on orthogonality of the isoperimetric minimizers to the boundary of $\Omega$ in this setting (see Gr\"{u}ter \cite{Gruter} and also \cite[Appendix A]{EMilman-RoleOfConvexity} for a generalization to the case of a manifold-with-density). The geodesic convexity of $\Omega$ then implies that $(A^d_t \cap \Omega) \setminus A \subset \Phi(\partial_r A \times [0,t)) \cup \partial_s A$ for any isoperimetric minimizer $A$, where $\partial_r A$ and $\partial_s A$ denote the regular and singular parts of $\partial A$ respectively, and
$\Phi(x,t) = \exp_x(t \nu(x))$ is the normal map in the direction of the outer normal field $\nu$.
Consequently, the various versions of the Heintze--Karcher theorem and Theorem \ref{thm:tool} remain valid. The orthogonality to the boundary also ensures that the formula for the \emph{first} variation of area remains unaltered and therefore so does Proposition \ref{prop:H-is-derivative}  (see e.g. Sternberg--Zumbrun \cite{SternbergZumbrun}, Bayle--Rosales \cite{BayleRosales}); on the other hand, the \emph{second} variation in the presence of a boundary becomes even more complicated. A tedious computation invoking the second variation demonstrates (see Section \ref{sec:1vs2} and the references therein) that Theorem \ref{thm:concave} on the concavity of the isoperimetric profile also remains valid in this setting.

The assumption of geodesic convexity is a \emph{global} convexity assumption. This is one drawback of using the Heintze--Karcher theorem compared to second variation of area methods. The latter ones allow one to deduce a second order differential inequality on the isoperimetric profile, just from a weaker \emph{local} convexity assumption on the boundary of $\Omega$ - the requirement that the second fundamental form on $\partial \Omega$ be non-negative (see \cite{SternbergZumbrun}, \cite{BayleRosales} and \cite[Appendix A]{EMilman-RoleOfConvexity}).

\subsection{Constant Density Case}

When $\mu$ has constant density and our smooth convexity assumptions are satisfied, one may replace Theorem \ref{thm:gen-HK} by the classical Heintze--Karcher theorem in the derivation of our results (as in Subsection \ref{subsec:combining}, it still applies to non-entirely smooth isoperimetric minimizers). This would lead to some slightly more refined versions of Theorems \ref{thm:1} and \ref{thm:2}, recovering the correct behaviour of $\tilde{I}(v)$ as $v$ tends to $0$, namely $v^{\frac{n-1}{n}}$, where $n$ is the dimension of the manifold $M$. We do not pursue this point here.

Let us only mention that the same argument as in Proposition \ref{prop:weak-concave} would imply that $\I^{\frac{n}{n-1}}(v)/v$ is non-increasing on $(0,1)$. In the range $(0,1/2]$, this was previously shown by Gallot \cite[Corollary 6.6]{GallotIsoperimetricInqs} by an argument similar to ours, which in addition used a comparison with a model space. This is also in agreement with the stronger result known in this case \cite{Kuwert,BayleRosales}, stating that in fact $\I^{\frac{n}{n-1}}$ is concave. In addition, when $\mu$ has constant density, one may show that a similar lower bound to the one in the statement of Theorem \ref{thm:main-better} holds for the ratio $\tilde{\I}(v) / v^{\frac{n-1}{n}}$.

\subsection{Positive Curvature}

It is also possible to obtain better estimates in the case $\kappa = -\rho < 0$, i.e. when:
\begin{equation} \label{eq:CD-pos}
 Ric_g + Hess_g \psi \geq \rho g ~,~ \rho > 0 ~.
\end{equation}
As described in the Introduction, this case has been extensively studied by many authors, and many of the tools described in Section \ref{sec:prelim} have been used primarily to handle this case. Gromov \cite{GromovGeneralizationOfLevy} treated (compact) manifolds with uniform density and $Ric_g \geq \rho g$, obtaining a sharp isoperimetric inequality generalizing that of P. L\'evy for the sphere. By taking into account the diameter of the manifold, B\'erard--Besson--Gallot \cite{BBGImprovingGromov} were able to further improve on Gromov's estimate (and also treat the case $\rho \leq 0$, deriving a dimension dependent bound). This is roughly in the spirit of our idea - using information from concentration inequalities (in this case, that $\K(r) = +\infty$ for all $r$ greater than the diameter) to obtain or improve isoperimetric inequalities.

Things become even more interesting when considering manifolds-with-density satisfying $(\ref{eq:CD-pos})$. Bakry and \'Emery \cite{BakryEmery} were the first to realize that the condition (\ref{eq:CD-pos}) correctly captures the interplay between the geometry and the measure, and derived a sharp log-Sobolev inequality in this case by using a corresponding diffusion semi-group. This was substantially strengthened by Bakry and Ledoux \cite{BakryLedoux}, who obtained from (\ref{eq:CD-pos}) a sharp Gaussian isoperimetric inequality by using the semi-group method together with a functional form of the Gaussian isoperimetric inequality due to Bobkov \cite{BobkovFunctionalFormOfGaussianIsopInq}. In \cite{BobkovLocalizedProofOfGaussianIso}, Bobkov was able to reproduce this result in the Euclidean setting by using a geometric localization technique, which does not seem to handle more general manifolds. An elegant geometric proof of this inequality in the general manifold-with-density setting, which in addition characterizes the equality cases, was recently obtained by Morgan \cite{MorganManifoldsWithDensity},\cite[Chapter 18]{MorganBook4Ed}.

In many respects this is a satisfactory stopping point for the study of isoperimetric inequalities on manifolds-with-density satisfying (\ref{eq:CD-pos}). But one may still wonder how to combine (\ref{eq:CD-pos}) with \emph{additional} information from concentration inequalities. This may now be accomplished by repeating the relevant parts of the proofs of Theorems \ref{thm:1} and \ref{thm:2}. Alternatively, as noted to us by one of the referees, if one does not care about obtaining the best possible numeric constants, one may simply superimpose the additional concentration information with the concentration resulting from the Bakry--\'Emery or Bakry--Ledoux results above, and employ Theorem \ref{thm:1}, since all the information given by (\ref{eq:CD-pos}) will already be encoded in the resulting concentration inequality.

\subsection{Dimension Dependence}

Before concluding, we would like to explain our remark from the Introduction, regarding the dimension dependence of all the previous results in the spirit of Corollary \ref{cor:gen-Bobkov}. These results were derived under the smooth $\kappa$-semi-convexity assumptions, typically showing that the weaker integrability condition:
\[
 Q := \int_\Omega \exp(\beta(d(x,x_0))) d\mu < \infty
\]
for some function $\beta$ increasing to infinity, implies a corresponding isoperimetric inequality, with bounds depending on $Q$. Under the natural normalization that $\snorm{\frac{d\mu}{dvol_M}}^{1/n}_{L_\infty} \leq C$, it is possible to show that $Q$ must depend on the dimension $n$ of the underlying manifold $M$, rendering these results dimension dependent. Indeed, by the Markov--Chebyshev inequality:
\[
 Q \geq \mu\set{x \in \Omega ; d(x,x_0) \geq R} \exp(\beta(R)) ~,
\]
so one just needs to bound the measure of geodesic balls. But even in the case that $\mu$ has constant density (as above) with respect to $vol_M$, Bishop's volume-comparison theorem \cite{GHLBook} implies under our $\kappa$-semi-convexity assumptions that:
\[
 \mu\set{x \in \Omega ; d(x,x_0) \leq R} \leq C^n \textrm{Vol}(S^{n-1}) \int_0^R \brac{\frac{ \sinh(\sqrt{\kappa} r)}{\sqrt{\kappa}}}^{n-1} dr ~,
\]
where the integrand should be interpreted as $r^{n-1}$ if $\kappa = 0$, and $\textrm{Vol}(S^{n-1})$ denotes the Lebesgue measure of the $n-1$ dimensional Euclidean unit sphere, which is known to be of the order of $n^{-\frac{n-2}{2}}$. Consequently, $R$ must be at least of the order of $\frac{\log \sqrt{n \kappa}}{\sqrt{\kappa}}$ for $\kappa \gg 1/n$ and $\sqrt{n}$ otherwise, to ensure that the measure of the ball of radius $R$ is $1/2$, yielding a bad dimension dependent lower bound for $Q$.

\def\cprime{$'$}

\end{document}